%
%
%
%
%
\documentclass[a4paper]{amsart}
\usepackage{amsthm, dsfont, a4}
\usepackage{hyperref, xcolor}
\usepackage[all]{xy}
\usepackage[active]{srcltx}
\usepackage[short,nodayofweek]{datetime}
\usepackage[utf8]{inputenc}

\definecolor{my-blue}{cmyk}{1,0.6,0,0}

\definecolor{my-green}{cmyk}{0.8,0,1,0.5}

\hypersetup{
colorlinks=true, 
linkcolor=my-blue, 
citecolor=my-green,
urlcolor=black
}

\newcommand\EE{{\mathbb E}}
\newcommand\FF{{\mathbb F}}

\newcommand\NN{{\mathbb N}}

\newcommand\TT{{\mathbb T}}

\newcommand{\KI}{K_\infty}
\newcommand{\Ksep}{K^{\rm sep}}
\newcommand{\KIsep}{\KI^{\rm sep}}  

\newcommand{\Fq}{\FF_{\!q}}
\newcommand{\cL}{\mathcal{L}}

\newcommand\pitilde{\tilde{\pi}}
\newcommand\id{{\rm id}}

\newcommand{\transp}[1]{#1^{\rm tr}}   


\newcommand{\hd}[2]{\partial^{(#1)}\!\!\left(#2\right)}
\newcommand{\hde}[1]{\partial^{(#1)}}  
\newcommand{\hdt}[2]{\partial_t^{(#1)}\!\!\left(#2\right)}
\newcommand{\hdte}[1]{\partial_t^{(#1)}}  
\newcommand{\hdth}[2]{\partial_{\theta}^{(#1)}\!\!\left(#2\right)}
\newcommand{\hdthe}[1]{\partial_{\theta}^{(#1)}}  
\newcommand{\D}{\mathcal{D}} 
\newcommand{\Dt}{\mathcal{D}_t} 
\newcommand{\Dth}{\mathcal{D}_\theta} 

\newcommand{\ps}[1]{[\![#1]\!]}  
\newcommand{\ls}[1]{(\!(#1)\!)}
\newcommand{\cs}[1]{\{\!\{#1\}\!\}}


\def\betr#1{\lvert #1\rvert_\infty}


\DeclareMathOperator{\ch}{char}
\DeclareMathOperator{\Mat}{Mat}


\theoremstyle{plain}
\newtheorem{thm}{Theorem}[section]
\newtheorem{cor}[thm]{Corollary}
\newtheorem{lem}[thm]{Lemma}
\newtheorem{prop}[thm]{Proposition}

\newtheorem*{thm*}{Theorem}

\theoremstyle{definition}
\newtheorem{defn}[thm]{Definition}
\newtheorem{exmp}[thm]{Example}
\newtheorem{rem}[thm]{Remark}


\newenvironment{ack}{\textbf{Acknowledgement}}{}


\begin{document}

\title[Hyperderivatives of Carlitz period]{Algebraic independence of the Carlitz period and its hyperderivatives.}
\author{Andreas Maurischat}
\address{\rm {\bf Andreas Maurischat}, Lehrstuhl A f\"ur Mathematik, RWTH Aachen University \& FH Aachen University of Applied Sciences, Germany }
\email{\sf maurischat@fh-aachen.de}
\date{$2^{\rm nd}$ Dec, 2021}


\keywords{Drinfeld modules, periods, t-modules, transcendence, higher derivations, hyperdifferentials}

\begin{abstract}
This paper deals with the fundamental period $\tilde{\pi}$ of the Carlitz module. The main theorem states that the Carlitz period 
and all its hyperderivatives are algebraically independent over the base field $\Fq(\theta)$. Our approach also reveals a connection of these hyperderivatives with the coordinates of a period lattice generator of the tensor powers of the Carlitz module which was already observed by M.~Papanikolas in a yet unpublished paper. Namely, these coordinates can be obtained by explicit polynomial expressions in $\tilde{\pi}$ and its hyperderivatives. Papanikolas also gave various presentations of these expressions  which we also prove here.
\end{abstract}

\maketitle

\setcounter{tocdepth}{1}
\tableofcontents

\section*{Introduction}

Periods of Drinfeld modules and Anderson $t$-modules play a central role in number theory in positive characteristic, and
questions about their algebraic independence are of major interest.
The most prominent period is the Carlitz period
\[ \tilde{\pi}=\lambda_\theta \theta \prod_{j \geq 1} (1 - \theta^{1-q^j})^{-1} \in \KI(\lambda_\theta), \]
where $\lambda_\theta\in \Ksep$ is a $(q-1)$-th root of $-\theta$.
Here, $K=\Fq(\theta)$ is the rational function field over the finite field $\Fq$, $\Ksep$ its separable algebraic closure, and $\KI=\Fq\ls{\frac{1}{\theta}}$ is the completion of $K$ with respect to the absolute value
$|\cdot|_\infty$ given by $|\theta|_\infty=q$. 

The Carlitz period is the function field analog of the complex number $2\pi i$, and it was already proven 
by Wade in 1941 that $\pitilde$ is transcendental over $K$ (see \cite{liw:cqtg}).

On the field $\KI(\lambda_\theta)$, one can consider the hyperdifferential operators with respect to $\theta$, denoted by $\hdthe{n}$, $n\geq 0$, which are defined for $\sum_{i=i_0}^\infty c_i \theta^{-i}\in \KI=\Fq\ls{\frac{1}{\theta}}$ by
\[  \hdth{n}{\sum_{i=i_0}^\infty c_i \theta^{-i}} = \sum_{i=i_0}^\infty c_i \binom{-i}{n}\theta^{-i-n}, \]
and are uniquely extended to $\KI(\lambda_\theta)$.

Brownawell and Denis considered hyperderivatives of periods and quasi-periods of Drinfeld modules, and showed linear independence and algebraic independence results for them (see \cite{ld:dmdt}, \cite{ld:iac2}, \cite{db:lidddm-I}, \cite{ld:iadpmc}, \cite{db-ld:lidddm-II}). With respect to the hyperderivatives of the Carlitz period $\pitilde$, they proved the following.

\begin{thm*}
\begin{enumerate}
\item (see \cite[Thm.~1.1]{db-ld:lidddm-II}, case $d=1$)\\
The elements $1,\pitilde, \hdth{1}{\pitilde},\hdth{2}{\pitilde},\ldots $ are $\bar{K}$-linearly independent, where $\bar{K}$ is an algebraic closure of $K$.
\item (see \cite[Thm.~1]{ld:iadpmc})\\
The elements $\pitilde, \hdth{1}{\pitilde},\ldots, \hdth{p-1}{\pitilde}$ are algebraically independent over $K$, where $p=\ch(\Fq)$ is the characteristic of $\Fq$.
\end{enumerate}
\end{thm*}

In this paper, we prove a much stronger result.

\begin{thm} (see Theorem~\ref{thm:pitilde-hypertrancendental})\\
The Carlitz period $\pitilde$ is hypertranscendental over $K=\Fq(\theta)$, i.e.~the set $\{\hdth{n}{\pitilde} \mid n\geq 0\}$ is algebraically independent over $K$.
\end{thm}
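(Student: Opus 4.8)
The natural framework for a transcendence statement of this strength is the theory of $t$-motives and the Papanikolas transcendence machinery (the "$\Gamma$-transcendence" criterion, or equivalently the ABP criterion combined with difference Galois groups). The idea is to realize $\pitilde$ and all its hyperderivatives $\hdth{n}{\pitilde}$ simultaneously as periods (or "quasi-periods") of a single, carefully chosen $t$-module, and then to compute the dimension of the associated motivic Galois group, showing that it is as large as it can possibly be. Since the transcendence degree of the period ring over $K$ is bounded below by this Galois dimension, a maximal Galois group forces algebraic independence of all the entries.

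First I would recall that $\pitilde$ arises as the value at $t=\theta$ of (the reciprocal of) the Anderson–Thakur rigid analytic trivialization of the Carlitz motive $C$: concretely $\Omega(t) = \prod_{j\geq 1}\bigl(1 - t/\theta^{q^j}\bigr)$ satisfies $\Omega^{(-1)} = (t-\theta)\Omega$ and $\Omega(\theta)^{-1} = \pitilde$ up to the algebraic factor $\lambda_\theta$. The key observation — and this is the structural heart of the paper, matching Papanikolas's unpublished computation alluded to in the abstract — is that the hyperderivatives $\hdthe{n}$ applied to $\pitilde$ show up naturally when one passes to the $n$-th tensor power $C^{\otimes n}$ of the Carlitz module, or more precisely to a "prolongation" or jet construction built from $C$. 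So the plan is: (i) construct the relevant $t$-module $E$ (a suitable iterated extension / tensor / prolongation of Carlitz modules) whose period lattice generator has coordinates that are explicit polynomials in $\pitilde, \hdth{1}{\pitilde},\dots,\hdth{n}{\pitilde}$; (ii) write down its rigid analytic trivialization $\Psi$ built from $\Omega$ and its hyperderivatives $\hdte{k}(\Omega)$ (here hyperdifferentiation in the variable $t$), which by the relation $\hdte{k}\bigl((t-\theta)\Omega\bigr)$ produces a triangular system; (iii) identify the entries of $\Psi(\theta)$ with the quantities we care about.

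The main work — and the main obstacle — is the Galois group computation. One must show that the difference Galois group $\Gamma_{\Psi}$ of this trivialization is the full group of matrices respecting the (triangular, unipotent-over-$\GG_m$) shape dictated by the functional equations, i.e.~that there are no "extra" algebraic relations forcing the group to be smaller. The standard strategy is: the Galois group surjects onto $\GG_m$ (coming from the Carlitz quotient, i.e.~from the transcendence of $\pitilde$ itself, which is classical), so it suffices to control the unipotent radical; one then shows the unipotent radical is as large as possible by an inductive argument on $n$, using that each new hyperderivative $\hdth{n}{\pitilde}$ cannot be expressed in terms of the previous ones — typically proved by a dimension/weight count or by exhibiting an appropriate sub-motive and invoking that a nontrivial relation would violate Frobenius-semisimplicity or the structure of $\Ext^1$ groups of the relevant $t$-motives. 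An alternative, more self-contained route avoiding the full machinery would be to set up the problem over the power series ring, use the $\sigma$-difference equation $\hdte{k}(\Omega)$ satisfied by the vector $(\Omega, \hdte{1}\Omega, \dots)$, and directly apply an effective criterion (in the spirit of Papanikolas's Theorem, or the later refinements by Chang–Papanikolas) that bounds transcendence degree from below by the rank of a certain matrix of functional-equation data; here the obstacle becomes verifying that this rank is full, which again reduces to a linear-independence statement over the function field $\overline{\FF_q(t)}$ that one proves by a Frobenius-difference argument (no nonzero $\overline{\FF_q(t)}$-linear combination of $\Omega$ and its $t$-hyperderivatives can satisfy the "wrong" kind of equation).

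Concretely, the skeleton I would write is: \textbf{Step 1} — define the $t$-module $E_n$ and its motive, establish the rigid analytic trivialization $\Psi_n$ in terms of $\Omega$ and $\hdte{1}\Omega,\dots,\hdte{n}\Omega$, and verify the ABP-type integrality/entireness hypotheses. \textbf{Step 2} — prove the "period interpretation": the coordinates of a period lattice generator of $E_n$ are explicit polynomials (the ones Papanikolas wrote down) in $\pitilde$ and $\hdth{1}{\pitilde},\dots,\hdth{n}{\pitilde}$; this is essentially a manipulation translating $t$-hyperderivatives of $\Omega$ at $t=\theta$ into $\theta$-hyperderivatives of $\pitilde$, using the commutation rules between the two hyperdifferentiations and the product formula for $\pitilde$. \textbf{Step 3} — compute $\dim\Gamma_{\Psi_n}$; show it equals $n+1$ (the dimension of the "expected" group $\GG_m\ltimes \GG_a^n$). \textbf{Step 4} — invoke the lower bound $\operatorname{trdeg}_K K\bigl(\text{entries of }\Psi_n(\theta)\bigr)\geq \dim\Gamma_{\Psi_n} = n+1$ to conclude that $\pitilde,\hdth{1}{\pitilde},\dots,\hdth{n}{\pitilde}$ are algebraically independent over $K$; since $n$ is arbitrary, the whole set $\{\hdth{n}{\pitilde}\mid n\geq 0\}$ is algebraically independent, which is the assertion. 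The delicate point throughout is Step 3, and within it the verification that the unipotent radical is not "short-circuited" by a hidden relation among hyperderivatives — I would expect to spend most of the proof there, reducing it to a statement about linear independence of $\Omega$-hyperderivatives that can be checked by examining valuations/pole orders at $t=\theta$ and the behaviour under the Frobenius twist $(\cdot)^{(-1)}$.
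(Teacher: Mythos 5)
Your plan is essentially the paper's proof: the relevant object is the $n$-th prolongation of the Carlitz motive with rigid analytic trivialization $\Psi=\rho_{t,[n]}(\Omega)$, the Galois-group/transcendence-degree computation (giving $n+1$) is imported from the author's earlier paper on prolongations of $t$-motives, and the ABP criterion transfers this to the entries of $\Psi(\theta)^{-1}$, i.e.\ to the quantities $\hdt{j}{\Omega^{-1}}|_{t=\theta}$, $0\le j\le n$. The one concrete ingredient your Step 2 leaves implicit is Lemma~\ref{lem:theta-hyperderivatives-of-Omega}: because all but finitely many factors of $\Omega$ assemble into a $q^l$-th power (hence are killed by $\theta$-hyperderivatives of order $<q^l$), one gets $\hdth{j}{\Omega^{-1}}=b_j\,\Omega^{-1}$ with $b_j\in K(t)$, and feeding this into the commutation rule between $t$- and $\theta$-hyperdifferentiation yields the triangular change of basis between $\{\hdt{j}{\Omega^{-1}}|_{t=\theta}\}_{j\le n}$ and $\{\hdth{j}{\pitilde}\}_{j\le n}$ (Prop.~\ref{prop:same-span}) that closes the argument.
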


Recently, Namoijam proved an even more general result on hypertranscendence of periods and quasi-periods of Drinfeld modules using differential algebraic geometry in positive characteristic (see \cite[Thm.~1.1.5]{cn:arhpldm}). In the case of the Carlitz module, her Theorem 1.1.5 specializes to Theorem~\ref{thm:pitilde-hypertrancendental}. In the second main theorem of \cite{cn:arhpldm}, Namoijam extended Theorem 1.1.5 further to also include logarithms of algebraic points.

Our proof of Theorem~\ref{thm:pitilde-hypertrancendental} is similar to the proof of Theorem~8.1 in \cite{am:ptmaip} where we proved algebraic independence for the coordinates of a fundamental period $\pitilde_n=\transp{(z_1,\ldots, z_n)}$ of the $n$-th Carlitz tensor power $C^{\otimes n}$ ($n\geq 1$) if $n$ is prime to the characteristic.
Furthermore, these two proofs reveal a link between the hyperderivatives of $\pitilde$ and the coordinates of $\pitilde_n$.

\begin{thm} (see Thm.~\ref{thm:coordinates-in-k-space})\\
The coordinates $z_1,\ldots, z_n$ belong to the $K$-vector space generated by the set of ``monomials''
$\left\{ \prod_{j=1}^n \hdth{m_j}{\pitilde} \,\middle|\, \forall j: 0\leq m_j\leq n-1 \right\}. $
\end{thm}
The proof even provides an explicit description for these coordinates.

Such a link between the coordinates of $\pitilde_n$ and the hyperderivatives of $\pitilde$ has already been discovered by Papanikolas using \cite[Lemma 8.3]{am:ptmaip} and results from a yet unpublished manuscript \cite{mp:latpcmsvgl}.
Papanikolas obtained the following explicit description. For stating it, we consider the $t$-linear extensions of the hyperdifferential operators $\hdthe{n}$ to $\KI(\lambda_\theta)(t)$ which we still denote by $\hdthe{n}$.

\begin{thm}\label{thm:papanikolas-identity} (Papanikolas)\\
Let $\pitilde_n$ be chosen such that its last coordinate $z_n$ equals $\pitilde^n$.
For $l\geq 1$, let $\alpha_l(t)\in \Fq[\theta,t]$ be the $l$-th Anderson-Thakur polynomial, and $\Gamma_l$ the $l$-th Carlitz 
factorial. Then for all $j=0,\ldots, n-1$, one has
\[ z_{n-j}= \hdth{j}{\frac{\alpha_{n}(t)}{\Gamma_{n}}\cdot \pitilde^n}\Big|_{t=\theta}. \]
\end{thm}
(See Definition \ref{def:gamma-et-al} for precise definitions).

The proof of this identity, however, is quite long, and we give a shorter proof for it in Section \ref{sec:nicer-expressions}. The main improvement with respect to Papanikolas' proof is the following new identity in the ring $K\cs{t-\theta}$ of convergent power series in $(t-\theta)$.

\begin{thm}\label{thm:eta-in-intro} (see Thm.~\ref{thm:properties-of-eta} \eqref{item:identity-for-eta})\\
For $m\geq 0$, let $ \gamma_m(t)=\prod_{k=1}^m (\theta^{q^m}-t^{q^k})\in \Fq[\theta,t]$,
and
$ D_m = \prod_{k=0}^{m-1} \left(\theta^{q^m}-\theta^{q^{k}}\right)\in \Fq[\theta]$.
Further, let 
\[\eta = \prod_{m=1}^\infty \left( 1+\frac{(t-\theta)^{q^m}}{\theta^{q^m}-\theta}\right)\in K\cs{t-\theta}.\] 
Then
\begin{equation*} 
 \sum_{j=0}^\infty \frac{\gamma_j(t)}{D_j}\cdot \eta^{q^j} = 1\in K\cs{t-\theta}.
\end{equation*}
\end{thm}

Other improvements were obtained by a consequent use of Taylor series expansions and identities for such expansions.

\medskip

In Section \ref{sec:basics}, we set up the notation  and explain the basics on hyperdifferential operators and Taylor series expansions which are used later.
Section \ref{sec:hypertranscendence} is devoted to our main theorem and its proof. We continue by providing the connection to the coordinates of the period of Carlitz tensor powers in Section \ref{sec:tensor-power}. In Section \ref{sec:nicer-expressions}, we present our proofs of the explicit expressions for these period coordinates.

\medskip

\begin{ack}
We thank Matt Papanikolas for sharing his manuscript and for explaining his proof of Theorem \ref{thm:papanikolas-identity}. We also thank the referee for helpful comments and for pointing out some inaccuracies.
\end{ack}

\section{Notation and calculation rules}\label{sec:basics}

\subsection{Basic rings and fields}

Let $\Fq$ be the finite field with $q$ elements and characteristic $p$, and $K=\Fq(\theta)$ the rational function field in the variable $\theta$. 
We equip $K$ with the absolute value $|\cdot |_\infty$ which is given by $|\theta|_\infty=q$.
We further denote by $t$ a second indeterminate. All rings and fields occurring will be extensions of $K$ or of $K[t]$. These are

\noindent \begin{tabular}{p{2.2cm}p{10cm}}
$\KI=\Fq\ls{\frac{1}{\theta}}$& the completion of $K$ at this infinite place,\\
$\KIsep$& a separable algebraic closure of $\KI$ with the extension of the absolute value $|\cdot |_\infty$,\\
$\Ksep$& the separable algebraic closure of $K$ inside $\KIsep$,\\
$K(t)$& the rational function field over $K$,\\
$K[t]_{(t-\theta)}$ & the localization of $K[t]$ at the prime ideal $(t-\theta)$, i.e.~the rational functions in $K(t)$ which are regular at $t=\theta$,\\
$\KIsep\ps{t}$& the power series ring in $t$ with coefficients in $\KIsep$,\\
$\TT_\theta$ & the subring of $\KIsep\ps{t}$ consisting of series which converge on the closed disc of radius $\betr{\theta}=q$.\\
$\EE$& the subring of $\KIsep\ps{t}$ consisting of entire functions, i.e.~of those series $f(t)$ for which $f(x)$ converges for any $x\in \KIsep$, and whose coefficients lie in a finite extension of $\KI$,\\
$K\cs{t-\theta}$& the ring of power series $\sum_{n=0}^\infty a_n(t-\theta)^n$ with coefficients in $K$, and whose coefficients $a_n$ tend to zero as $n$ goes to infinity. \\
\end{tabular}

For elements $f$ in all these rings, we sometimes write $f(t)$ to emphasize the dependence on $t$, or even $f(\theta,t)$, to emphasize both dependencies. If we replace the variable $t$ of such a function $f$ by $z$ for some element $z$, we will denote this by $f|_{t=z}$ or by $f(\theta,z)$. Similarly, if we replace the variable $\theta$ by $z$, we write $f|_{\theta=z}$ or $f(z,t)$.

\subsection{Hyperdifferential operators}

In this subsection, we give a short presentation of hyperdifferential operators, also called higher derivations (see e.g.~\cite[\S 27]{hm:crt}, or \cite{kc:dp}).

\begin{defn}
A higher derivation on an $\Fq$-algebra $R$ is a family of $\Fq$-linear operators $(\hde{n}:R\to R)_{n\geq 0}$ such that
\begin{enumerate}
\item $\hde{0}=\id_R$,
\item $\forall n\geq 0$, $\forall f,g\in R$: $\hd{n}{f\cdot g} = \sum\limits_{i+j=n} \hd{i}{f}\cdot \hd{j}{g}$ \textit{(generalized Leibniz rule)}
\end{enumerate}
\end{defn}

\begin{rem}
By definition, the hyperdifferential operator $\partial:=\hde{1}$ is a derivation, and 
$\hd{n}{f}$ corresponds to $\tfrac{1}{n!}\partial^n(f)$ in characteristic zero.
\end{rem}

For many calculations, it is much more convenient to consider the corresponding map
\[ \D: R\to R\ps{X}, f\mapsto \sum_{n \geq 0} \hd{n}{f} X^n, \]
also called the \textit{(generic) Taylor series expansion}. Namely, the $\Fq$-linearity and the general Leibniz rule imply that the map $\D$ is a homomorphism of $\Fq$-algebras (actually is equivalent to that condition). In particular, $\D$ is determined by the images of generators of the $\Fq$-algebra $R$.

\begin{prop}\label{prop:unique-extensions} (see \cite[Thm.~5 \& 6]{kc:dp})\\
Higher derivations can be uniquely extended to localizations and to separable algebraic field extensions.\\
If $R$ is equipped with an absolute value, and all $\hde{n}$ are continuous, the $\hde{n}$ and also $\D$ can be continuously extended to the completion of $R$ with respect to the absolute value in a unique way.
\end{prop}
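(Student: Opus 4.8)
The final statement to prove is Proposition~\ref{prop:unique-extensions}, which asserts that higher derivations extend uniquely to localizations, to separable algebraic field extensions, and (in the continuous case) to completions. Although this is cited to Kolchin/Conrad, I will sketch a self-contained argument.

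\textbf{Plan via the Taylor map.} The cleanest approach is to work with the algebra homomorphism $\D\colon R\to R\ps{X}$ rather than the individual $\hde{n}$, since a higher derivation on $R$ is the same datum as an $\Fq$-algebra homomorphism $\D$ that reduces to the identity modulo $X$. For a localization $S^{-1}R$, one first observes $S^{-1}R\ps{X}$ makes sense and that $\D(s)$ is a unit in $R\ps{X}$ for each $s\in S$: its constant term is $s$, which becomes invertible in $S^{-1}R$, and a power series over a ring is a unit iff its constant term is. Hence $R\to (S^{-1}R)\ps{X}$ extends uniquely by the universal property of localization to $\widetilde{\D}\colon S^{-1}R\to (S^{-1}R)\ps{X}$, with $\widetilde{\D}(r/s)=\D(r)\cdot\D(s)^{-1}$; reduction mod $X$ recovers the identity, so the coefficients of $\widetilde{\D}$ define the unique extended higher derivation. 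Uniqueness is forced because any extension must be an algebra homomorphism restricting to $\D$ on $R$ and must send $1/s$ to the inverse of $\D(s)$.

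\textbf{Separable algebraic extensions.} For $L/\mathrm{Frac}(R)$ finite separable (the general algebraic case follows by taking a union/direct limit over finite subextensions), write $L=\mathrm{Frac}(R)[\alpha]$ with $\alpha$ a root of a separable polynomial $P(Y)\in \mathrm{Frac}(R)[Y]$, so $P'(\alpha)\ne 0$. Applying $\D$ coefficientwise to $P$ gives $P^{\D}(Y)\in \mathrm{Frac}(R)\ps{X}[Y]$ with $P^{\D}\equiv P\pmod X$. I want a root $\widetilde{\alpha}\in L\ps{X}$ of $P^{\D}$ with $\widetilde{\alpha}\equiv\alpha\pmod X$; this is exactly Hensel's lemma for the $X$-adically complete ring $L\ps{X}$, whose residue ring is $L$, using that $P'(\alpha)$ is a unit in $L\ps{X}$ by separability. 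Then $\widetilde{\D}\colon L\to L\ps{X}$, $\alpha\mapsto\widetilde{\alpha}$, extending $\D$ on $\mathrm{Frac}(R)$, is a well-defined algebra homomorphism reducing to the identity mod $X$, hence gives the sought higher derivation; Hensel uniqueness of the lift $\widetilde{\alpha}$ gives uniqueness of the extension.

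\textbf{Completions.} Here continuity of each $\hde{n}$ is assumed. Given a Cauchy sequence $(f_k)$ in $R$ with limit $f$ in the completion $\hat R$, continuity shows $(\hd{n}{f_k})_k$ is Cauchy for each fixed $n$, so one defines $\hd{n}{f}$ as its limit; one checks this is independent of the chosen sequence and that $\Fq$-linearity and the Leibniz rule pass to the limit (the Leibniz sum is finite, so no convergence subtlety beyond continuity of multiplication, which holds in a topological ring/field). Uniqueness is immediate from continuity and density of $R$ in $\hat R$. The only genuinely delicate point in the whole proposition is verifying that the Leibniz identity survives each construction — for localizations and separable extensions this is automatic once everything is phrased through the algebra homomorphism $\D$, which is why I would organize the proof around $\D$ from the start; for completions it reduces to continuity of the ring operations. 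I expect the main obstacle, such as it is, to be bookkeeping the passage from finite separable extensions to arbitrary separable algebraic ones and confirming compatibility of the extensions along the directed system, which is routine but must be stated carefully.
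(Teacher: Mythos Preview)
The paper does not actually prove this proposition: it is stated with a citation to \cite[Thm.~5 \& 6]{kc:dp} and no proof environment follows. Your proposal supplies a correct self-contained sketch along the standard lines (localization via the universal property applied to the Taylor map $\D$, separable extensions via Hensel's lemma in the $X$-adically complete ring $L\ps{X}$, completions via density and continuity), so there is nothing to compare against in the paper itself beyond noting that your argument is consistent with the cited source's approach.
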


The higher derivations/hyperdifferential operators that are relevant in this paper are the following.

\begin{exmp}
On the polynomial ring $\Fq[\theta,t]$, we consider two families of hyperdifferential operators, the  hyperdifferential operators $(\hdthe{n})_{n\geq 0}$  with respect to $\theta$ -- with corresponding generic Taylor series expansion $\Dth$ --, and the  hyperdifferential operators $(\hdte{n})_{n\geq 0}$  with respect to $t$ -- with corresponding generic Taylor series expansion $\Dt$.
They are determined by
\[  \Dth(\theta)=\theta+X,\,\, \Dth(t)=t \quad\text{and} \quad \Dt(\theta)=\theta,\,\, \Dt(t)=t+X. \]
Explicitly, $(\hdthe{n})_{n\geq 0}$ and $(\hdte{n})_{n\geq 0}$ are given by
\begin{eqnarray*}
\hdth{n}{\sum_{i,j} c_{ij}\theta^it^j} &=& \sum_{i,j} \binom{i}{n} c_{ij}\theta^{i-n}t^j, \\
\hdt{n}{\sum_{i,j} c_{ij}\theta^it^j} &=& \sum_{i,j} \binom{j}{n} c_{ij}\theta^{i}t^{j-n}.
\end{eqnarray*}
The maps $\Dth$ and $\Dt$ are extended (by Prop.~\ref{prop:unique-extensions} in a unique way) to the localization $K(t)$, as well as continuously to $\KIsep\ps{t}$ and $K\cs{t-\theta}$ as in \cite{kc:dp}.

This means, $\Dth$ is given on these rings by replacing the variable $\theta$ by $\theta+X$, and expanding the expression into a power series in $X$. Similar, $\Dt$ is given by replacing the variable $t$ by $t+X$, and expanding the expression into a power series in $X$.

It is not hard to see that all the rings mentioned above are stable under both families of hyperdifferential operators.
\end{exmp}

\begin{rem}
Given a generic Taylor series expansion $\D:R\to R\ps{X}$ on an $\Fq$-algebra $R$, we also denote by $\D$ its $X$-linear extension
\[ \D:R\ps{X}\to R\ps{X}. \]
This will be used, when we consider compositions of hyperdifferential operators as in the following lemma.
\end{rem}

\begin{lem}\label{lem:composition-rules}
Let $R$ be one of the rings above.
For all $f(\theta,t)\in R$, one has
\begin{enumerate}
\item $\forall n,m\geq 0$: $\hdt{n}{\hdth{m}{f}}=\hdth{m}{\hdt{n}{f}}$, or equivalently\\ $\Dth(\Dt(f))=\Dt(\Dth(f))$.
\item If we can evaluate $t$ at $\theta$, i.e.~$R$ is a subring of $K[t]_{(t-\theta)}$, $\TT_\theta$ or $K\cs{t-\theta}$, then for all $n\geq 0$:
\[  \hdth{n}{f(\theta,\theta)} = \sum_{i+j=n} \hdt{i}{\hdth{j}{f(\theta,t)}}|_{t=\theta}. \]
Equivalently, in terms of $\Dth$ and $\Dt$:\\
\[  \Dth\left(f(\theta,t)|_{t=\theta}\right) = \Dt\left(\Dth(f(\theta,t)\right)|_{t=\theta}. \]
\end{enumerate}
\end{lem}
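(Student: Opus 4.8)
The plan is to verify both identities by passing to the generic Taylor series expansions $\Dth$ and $\Dt$, since these are $\Fq$-algebra homomorphisms and hence it suffices to check the claimed equalities on a set of $\Fq$-algebra generators of $R$, then extend by continuity and by Proposition~\ref{prop:unique-extensions} to localizations and completions. For part~(1), the key observation is that both $\Dth$ and $\Dt$ are substitution operators in disjoint variables: $\Dth$ substitutes $\theta\mapsto\theta+X$ (treating $t$ as a constant) and $\Dt$ substitutes $t\mapsto t+X$ (treating $\theta$ as a constant). I would introduce two independent indeterminates, say $X$ for the $\Dt$-expansion and $Y$ for the $\Dth$-expansion, so that $\Dt(\Dth(f))$ and $\Dth(\Dt(f))$ both live in $R\ps{X}\ps{Y}$ (or $R\ps{X,Y}$), and observe that each equals the result of the double substitution $\theta\mapsto\theta+Y$, $t\mapsto t+X$ in $f(\theta,t)$, expanded into a double power series. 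Since substitutions in independent variables commute, the two expressions agree; comparing coefficients of $X^nY^m$ gives $\hdt{n}{\hdth{m}{f}}=\hdth{m}{\hdt{n}{f}}$. One must be slightly careful that when one writes $\hdthe{m}$ as the $X$-linear extension applied after $\hdte{n}$, the $\theta$-substitution still only touches $\theta$ and not the freshly introduced $X$; this is exactly the content of the remark preceding the lemma, and on generators it is immediate.

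For part~(2), write $g(\theta):=f(\theta,\theta)$, the restriction to the diagonal $t=\theta$. On $\Fq$-algebra generators (and then by continuity on the completions $\KIsep\ps{t}$, $K\cs{t-\theta}$) the composite operation ``restrict to the diagonal'' is realized by the substitution $t\mapsto\theta$, i.e.\ $g(\theta)=f(\theta,t)|_{t=\theta}$. Applying $\Dth$ to $g$ substitutes $\theta\mapsto\theta+X$ everywhere, producing $f(\theta+X,\theta+X)$, again expanded as a power series in $X$. On the other hand, $\Dth(f(\theta,t))=f(\theta+X,t)$ as a series in $X$, and then applying $\Dt$ (its $X$-linear extension) substitutes $t\mapsto t+X$, giving $f(\theta+X,t+X)$; finally restricting $t=\theta$ yields $f(\theta+X,\theta+X)$ — the same series. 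Hence $\Dth(g)=\Dt(\Dth(f))|_{t=\theta}$, which is the claimed equivalent form. Reading off the coefficient of $X^n$ on the left gives $\hdth{n}{f(\theta,\theta)}$, and on the right the $X^n$-coefficient of $\Dt(\Dth(f))$ is $\sum_{i+j=n}\hdt{i}{\hdth{j}{f}}$, so evaluating at $t=\theta$ produces the stated sum.

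The only real subtlety — and the main thing to get right rather than a genuine obstacle — is the bookkeeping about which variable each operator acts on once $X$ has been introduced, and the legitimacy of interchanging ``substitute $t=\theta$'' with the limit defining $\Dt$ and $\Dth$ on the completed rings. For $K\cs{t-\theta}$ the substitution $t=\theta$ is just extracting the constant term of the $(t-\theta)$-expansion, and the hyperdifferential operators were extended continuously precisely so that such evaluations commute with them; for $\KIsep\ps{t}$ one argues similarly, or reduces to $K[\theta,t]$ by density. I would therefore first prove everything cleanly on the polynomial ring $\Fq[\theta,t]$ by the substitution argument above, then invoke Proposition~\ref{prop:unique-extensions} and the continuity of the operators to transfer both statements to each of the rings $R$ in the list, and finally translate back from the $\Dth,\Dt$ formulation to the coefficientwise statements to obtain the displayed formulas.
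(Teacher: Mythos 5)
Your proposal is correct and follows essentially the same route as the paper: both identities are verified by observing that $\Dth$ and $\Dt$ act by the substitutions $\theta\mapsto\theta+X$ and $t\mapsto t+X$, so that each composite equals $f(\theta+X,t+X)$ (restricted to $t=\theta$ for part (2)). Your use of two independent indeterminates in part (1) is a minor refinement that makes the coefficientwise identity $\hdt{n}{\hdth{m}{f}}=\hdth{m}{\hdt{n}{f}}$ follow directly by comparing coefficients of $X^nY^m$, whereas the paper's single-variable computation with the $X$-linear extension literally compares only the diagonal sums; apart from this bookkeeping point the arguments coincide.
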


\begin{proof}
We check the equations using $\Dt$ and $\Dth$. For the first part one has:
\[ \Dth(\Dt(f(\theta,t)))=\Dth(f(\theta,t+X))=f(\theta+X,t+X)= \Dt(\Dth(f(\theta,t))).\]
For the second part:
\[ \Dth\left(f(\theta,t)|_{t=\theta}\right) = f(\theta+X,\theta+X)= f(\theta+X,t+X)|_{t=\theta}=\Dt\left(\Dth(f(\theta,t)\right)|_{t=\theta}. \]
\end{proof}

We will often need the hyperdifferential operators just up to some bound $n$, i.e.~$\hde{0},\hde{1},\ldots, \hde{n}$. For this it is convenient to transform the corresponding Taylor series homomorphism $\D$ to a homomorphism into a matrix ring.

\begin{defn} (cf.~\cite{am:ptmaip} or \cite{am-rp:iddbcppte})\\
Let $\D:R\to R\ps{X}$ be the Taylor series homomorphism corresponding to a higher derivation $(\hde{k})_{k\geq 0}$ on $R$, let $n\geq 0$, and let $N\in \Mat_{(n+1)\times (n+1)}(R)$ be the nilpotent matrix
\[ N=  \begin{pmatrix}
 0 & 1 & 0 & \cdots & 0 \\ \vdots & \ddots & \ddots & \ddots & \vdots  \\ \vdots && \ddots & \ddots & 0 \\ \vdots && & \ddots & 1 \\
 0 &  \cdots &\cdots &\cdots & 0
\end{pmatrix}. \]
We define the ring homomorphism $\rho_{[n]}:R\to \Mat_{(n+1)\times (n+1)}(R)$ to be the composition of $\D$ with the evaluation homomorphism replacing $X$ by $N$, i.e.~
\begin{equation}\label{eq:rho_n}
 \rho_{[n]}(f) :=\D(f)|_{X=N}= \begin{pmatrix} f & \hd{1}{f} & \cdots & \hd{n}{f} \\ 0 & f & \ddots & \vdots \\ \vdots & \ddots & \ddots & \hd{1}{f} \\ 0 & \cdots & 0 & f \end{pmatrix}.
\end{equation}

In case of the hyperdifferential operators with respect to $\theta$ and $t$, we add the variable as subscript, i.e.~ we denote
\[ \rho_{\theta,[n]}(f) :=\Dth(f)|_{X=N}\quad \text{as well as}\quad \rho_{t,[n]}(f) :=\Dt(f)|_{X=N}
.\]
\end{defn}

\section{Hypertranscendence of the Carlitz period}\label{sec:hypertranscendence}

The main result of this article is on the Carlitz period
\[ \tilde{\pi}=\lambda_\theta \theta \prod_{j \geq 1} (1 - \theta^{1-q^j})^{-1} \in \KI(\lambda_\theta), \]
where $\lambda_\theta\in \Ksep$ is a $(q-1)$-th root of $-\theta$, as well as on its hyperderivatives with respect to $\theta$.

\begin{thm}\label{thm:pitilde-hypertrancendental}
The Carlitz period $\pitilde$ is hypertranscendental over $K=\Fq(\theta)$, i.e.~the set $\{\hdth{n}{\pitilde} \mid n\geq 0\}$ is algebraically independent over $K$.
\end{thm}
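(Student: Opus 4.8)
The plan is to mimic the strategy of \cite[Thm.~8.1]{am:ptmaip}, where algebraic independence of the coordinates of $\pitilde_n$ was established via a motivic/difference-module argument. The key idea is to package the functions $\hdth{0}{\pitilde},\ldots,\hdth{n}{\pitilde}$ into a single object defined over $K(t)$ (or $\TT_\theta$) on which the Frobenius twist $\tau$ acts, and then apply a transcendence criterion of Anderson--Brownawell--Papanikolas (ABP) type, or rather its refinement via Papanikolas' theory of $t$-motives and the Galois group of the associated pre-$t$-motive, to deduce algebraic independence from the size of this Galois group. Concretely, recall that the Anderson--Thakur function $\Omega(t)=(-\theta)^{-q/(q-1)}\prod_{j\geq 1}(1-t/\theta^{q^j})$ satisfies $\tau\Omega=(t-\theta)\Omega$ and $\Omega|_{t=\theta}=-1/\pitilde$ (up to normalisation). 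I would first show that the entries of the matrix $\rho_{t,[n]}(\Omega)^{-1}$ — equivalently the hyperderivatives $\hdt{k}{\Omega^{-1}}$ — evaluated at $t=\theta$ produce, via Lemma~\ref{lem:composition-rules}(2), precisely the hyperderivatives $\hdth{k}{\pitilde}$ up to $K$-linear combinations and lower-order terms, so that hypertranscendence of $\pitilde$ is equivalent to the transcendence degree of the field generated by the entries of $\rho_{t,[n]}(\Omega)^{-1}|_{t=\theta}$ being $n+1$ for every $n$.

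Next I would set up the rigid analytically trivial pre-$t$-motive whose Frobenius matrix is $\rho_{t,[n]}(t-\theta)\in\Mat_{(n+1)\times(n+1)}(K[t])$: since $\rho_{t,[n]}$ is a ring homomorphism and $\Dt$ commutes with $\tau$ appropriately, the matrix $\Psi:=\rho_{t,[n]}(\Omega)$ is a rigid analytic trivialisation, i.e.\ $\tau\Psi=\rho_{t,[n]}(t-\theta)\cdot\Psi$. This pre-$t$-motive is an iterated extension of the Carlitz motive by itself (a ``Carlitz logarithm''-type object, in fact the $n$-th symmetric-power-like construction coming from the hyperderivative filtration), hence belongs to the Tannakian category and has a well-defined Galois group $\Gamma_{[n]}\subseteq\GL_{n+1}$. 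The crucial structural point is that this group is unipotent of the maximal possible shape: it is the full group of upper-triangular unipotent Toeplitz matrices (matrices of the form $\rho_{[n]}(1+\text{nilpotent})$), which is $(n+1-1)$-dimensional $=n$-dimensional, plus the one-dimensional torus coming from the underlying Carlitz motive, giving $\dim\Gamma_{[n]}=n+1$. One proves this by computing that there are no nontrivial ``extra'' algebraic relations: any proper algebraic subgroup through which the representation factors would force, after specialising, a polynomial relation among the $\hdth{k}{\pitilde}$ with coefficients in $\bar K$, and by a $\tau$-difference/ABP-style linear independence argument (exactly as in the $\pitilde_n$ case) one shows that already over $\bar K(t)$ the entries of $\Psi$ together with $1$ satisfy no such relation — the key computation being that $\Omega$ is not just transcendental but ``hypertranscendental'' at the level of $\tau$-difference equations, which follows from the fact that $t-\theta$ is not a unit times a $\tau$-image.

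By Papanikolas' main theorem the transcendence degree over $\bar K(t)$ of the entries of $\Psi$ equals $\dim\Gamma_{[n]}=n+1$, and then by the ABP specialisation principle the transcendence degree over $\bar K$ of the entries of $\Psi|_{t=\theta}$ is also $n+1$ (this is where one uses that $t-\theta$ vanishes at $t=\theta$ to exactly order matching the filtration, so no degeneration occurs). Translating back through the first step, this says $\hdth{0}{\pitilde},\ldots,\hdth{n}{\pitilde}$ are algebraically independent over $\bar K$, hence over $K$; letting $n\to\infty$ gives the theorem. The main obstacle — and the step requiring real work rather than bookkeeping — is the computation of the Galois group $\Gamma_{[n]}$, i.e.\ proving it is as large as possible: one must rule out every proper subgroup, and the cleanest route is to exhibit enough $\tau$-difference-equation-theoretic independence directly (showing the only solutions in $\bar K(t)$ to the relevant system of functional equations are the obvious ones), which amounts to a careful induction on $n$ using the structure of $\rho_{t,[n]}(t-\theta)$ and the known transcendence of $\pitilde$ itself as the base case. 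A secondary technical point is checking that the pre-$t$-motive is genuinely rigid analytically trivial with trivialisation in $\TT_\theta$ (not just formally), which follows because $\Omega\in\EE$ and $\rho_{t,[n]}$ preserves convergence radii.
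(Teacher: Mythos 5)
Your overall architecture coincides with the paper's: reduce hypertranscendence to the claim that the transcendence degree of $\bar K\bigl(\rho_{t,[n]}(\Omega^{-1})|_{t=\theta}\bigr)$ over $\bar K$ is $n+1$; obtain that number from the transcendence degree of $\bar K(t)(\Psi)$ over $\bar K(t)$, where $\Psi=\rho_{t,[n]}(\Omega)$ is the rigid analytic trivialization of the $n$-th prolongation of the Carlitz motive; and pass between the two via the ABP/Papanikolas specialization theorem. The paper does not recompute the Galois group: it cites \cite[Prop.~6.2]{am:ptmaip} for the trivialization and \cite[Thm.~7.2]{am:ptmaip} for the transcendence degree $n+1$ over $\bar K(t)$. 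Your second paragraph proposes to redo that computation from scratch; that is legitimate but is precisely the part that is already available in the literature, not the new content of this theorem.

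The genuine gap is in your first step, the translation between $\hdth{k}{\pitilde}$ and $\hdt{k}{\Omega^{-1}}|_{t=\theta}$. Lemma~\ref{lem:composition-rules}(2) gives
\[ -\hdth{n}{\pitilde}=\sum_{i+j=n}\hdt{i}{\hdth{j}{\Omega^{-1}}}\big|_{t=\theta}, \]
and only the $j=0$ term is of the required form $\hdt{i}{\Omega^{-1}}|_{t=\theta}$. The terms with $j>0$ involve $\theta$-hyperderivatives of $\Omega^{-1}$, which a priori are new transcendentals; the composition rule alone does not show that they contribute only ``$K$-linear combinations and lower-order terms.'' The missing ingredient --- the one genuinely new computation in the paper's proof, its Lemma~\ref{lem:theta-hyperderivatives-of-Omega} --- is that $\hdth{j}{\Omega^{-1}}=b_j\cdot\Omega^{-1}$ with $b_j\in K(t)$. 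This exploits the infinite-product form of $\Omega$: for $q^l>j$ the tail $\lambda_\theta^{q^l}\prod_{i\geq l}(1-t/\theta^{q^i})^{-1}$ has all $t$-coefficients equal to $q^l$-th powers in $\KIsep$, hence is annihilated by $\hdthe{i}$ for $0<i<q^l$, so only the finite head of the product contributes and yields a factor in $K(t)$. With this in hand, the Leibniz rule in $t$ and evaluation at $t=\theta$ (where elements of $K(t)$ become elements of $K$) produce exactly the triangular change of basis of Proposition~\ref{prop:same-span}, and your reduction goes through; without it, the claimed equivalence between hypertranscendence of $\pitilde$ and the transcendence degree of $\bar K(\Psi(\theta))$ is unproved. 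A small further caveat: your heuristic for the specialization step (``$t-\theta$ vanishes to exactly the order matching the filtration'') is not the actual hypothesis of \cite[Thm.~5.2.2]{mp:tdadmaicl}; what one checks is that the Frobenius matrix has entries in $\bar K[t]$ with determinant a power of $(t-\theta)$ up to a constant and that $\Psi$ has entire entries, which holds here since $\Omega\in\EE$.
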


The proof of this theorem will take up the whole section.
For the proof, we need the rigid analytic trivialization of the dual Carlitz motive. This is the
$\Omega$-function,
\[  \Omega(t)= \lambda_\theta^{-q} \prod_{j \geq 1} (1 - \frac{t}{\theta^{q^j}}) \in \KI(\lambda_\theta)\ps{t}, \]
which is an entire function, and satisfies
\[ \Omega(\theta)=-\frac{1}{\pitilde}.\]

\begin{lem}\label{lem:theta-hyperderivatives-of-Omega}
For all $j\geq 0$, there is $b_j\in K[t]_{(t-\theta)}$ such that $\hdth{j}{\Omega^{-1}}=b_j\cdot \Omega^{-1}$. 
These $b_j$ are given as $b_0=1$, and for $j>0$,
\begin{equation}\label{eq:b_j-simpler}
 b_j = \prod_{i=1}^{l-1} (\theta^{q^i}-t) \cdot \hdth{j}{\prod_{i=1}^{l-1} (\theta^{q^i}-t)^{-1} },
\end{equation}
where $l=\lfloor\log_q(j)\rfloor+1$.\footnote{As usual, empty products are always considered to be equal to $1$.}
\end{lem}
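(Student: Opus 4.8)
The plan is to exploit the infinite-product shape of $\Omega$ to show that $\Dth(\Omega^{-1})/\Omega^{-1}$ is a well-behaved element, and then to extract $b_j$ from its $X$-expansion. First I would write
\[ \Omega(t)^{-1} = \lambda_\theta^{q}\prod_{i\geq 1}\left(1-\tfrac{t}{\theta^{q^i}}\right)^{-1} = \lambda_\theta^q \prod_{i\geq 1} \frac{\theta^{q^i}}{\theta^{q^i}-t}. \]
Applying $\Dth$ means substituting $\theta\mapsto\theta+X$ and expanding in $X$. The key observation is that the factor $\lambda_\theta^q$ contributes only a $(q-1)$-st root of $-\theta$ raised to the $q$-th power, i.e.\ essentially $-\theta$ up to the $p$-th-power Frobenius twist absorbing the root; more precisely $\lambda_\theta^q = \lambda_\theta\cdot\lambda_\theta^{q-1} = -\theta\lambda_\theta$, so $\Dth(\lambda_\theta^q)$ is $\lambda_\theta^q$ times a rational function of $\theta$ (this uses that $\hdthe{n}$ extends to $\Ksep$ and that $\lambda_\theta^{q-1}=-\theta$). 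Hence $\Dth(\Omega^{-1})$ equals $\Omega^{-1}$ times a power series in $X$ with coefficients in $K(t)$, once we verify the infinite product over $i$ also behaves this way.

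The main technical point — and the step I expect to be the real obstacle — is controlling the tail of the infinite product: why is $\Dth\!\left(\prod_{i\geq 1}\frac{\theta^{q^i}}{\theta^{q^i}-t}\right)$ again $\prod_{i\geq 1}\frac{\theta^{q^i}}{\theta^{q^i}-t}$ times something \emph{rational} in $t$, rather than merely a convergent power series in $K\cs{t-\theta}\ps{X}$? The reason is that for $i\geq l$ (where $q^l > j$, equivalently $l=\lfloor\log_q j\rfloor+1$), the factor $\frac{\theta^{q^i}}{\theta^{q^i}-t}$ is a $q^i$-th power up to lower-order corrections: since $\hdth{n}{\theta^{q^i}}=\binom{q^i}{n}\theta^{q^i-n}=0$ for $0<n<q^i$ by Lucas' theorem, we get $\Dth(\theta^{q^i}) = \theta^{q^i} + X^{q^i}$ (a "Frobenius-linear" two-term expression), and therefore $\Dth\!\left(\frac{\theta^{q^i}}{\theta^{q^i}-t}\right) = \frac{\theta^{q^i}+X^{q^i}}{\theta^{q^i}+X^{q^i}-t}$, whose expansion in $X$ starts $\frac{\theta^{q^i}}{\theta^{q^i}-t} + O(X^{q^i})$. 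Consequently, \emph{modulo} $X^{q^l}$ — equivalently, for the purpose of reading off $\hdth{j}{\Omega^{-1}}$ with $j<q^l$ — only the finitely many factors with $i<l$ matter, and the tail $\prod_{i\geq l}$ contributes trivially to the coefficient of $X^j$. This is exactly what produces the finite product $\prod_{i=1}^{l-1}(\theta^{q^i}-t)$ in \eqref{eq:b_j-simpler}.

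Putting this together: working modulo $X^{q^l}$, one has $\Dth(\Omega^{-1}) \equiv \Omega^{-1}\cdot \Dth\!\left(\prod_{i=1}^{l-1}(\theta^{q^i}-t)^{-1}\right)\cdot\prod_{i=1}^{l-1}(\theta^{q^i}-t)$ up to the contribution of $\lambda_\theta^q$, and a short check shows the $\lambda_\theta^q$-factor cancels against the $\theta^{q^i}$ numerators or is itself absorbed (here one uses $\lambda_\theta^q/\theta = -\lambda_\theta$ and tracks that $\prod_{i=1}^{l-1}\theta^{q^i}$ together with $\lambda_\theta^q$ already sits inside $\Omega^{-1}$, so what remains is precisely $\prod_{i=1}^{l-1}(\theta^{q^i}-t)\cdot\Dth\!\left(\prod_{i=1}^{l-1}(\theta^{q^i}-t)^{-1}\right)$). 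Reading off the coefficient of $X^j$ on both sides gives $\hdth{j}{\Omega^{-1}} = b_j\,\Omega^{-1}$ with $b_j$ as in \eqref{eq:b_j-simpler}; that $b_j\in K(t)$ is clear since it is a hyperderivative of an explicit rational function times a polynomial. The case $j=0$ giving $b_0=1$ is immediate from $\hdthe{0}=\id$. Finally, the claim that this $b_j$ depends only on $l$ and not on a larger cutoff follows from the tail-triviality established above: enlarging $l$ changes neither side since the extra factors contribute $\equiv 1 \pmod{X^{q^l}}$.
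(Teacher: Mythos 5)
Your proposal is correct and follows essentially the same route as the paper's: split the infinite product at the index $l$ with $q^l>j$, observe that the tail consists (coefficientwise in $t$) of $q^l$-th powers so that its hyperderivatives of order $0<i<q^l$ vanish, and reduce everything to the finite head product via the Leibniz rule. The one step you defer to a ``short check'' --- the cancellation of the $\lambda_\theta^q$-factor against the numerators $\theta^{q^i}$ --- does go through, namely $\Dth(\lambda_\theta^q)/\lambda_\theta^q=(1+X/\theta)^{q/(q-1)}$ and $\prod_{i=1}^{l-1}\Dth(\theta^{q^i})/\theta^{q^i}=(1+X/\theta)^{(q^l-q)/(q-1)}$, whose product is $\bigl(1+(X/\theta)^{q^l}\bigr)^{1/(q-1)}\equiv 1 \pmod{X^{q^l}}$; the paper avoids this computation altogether by grouping $\lambda_\theta^{q^l}$ with the tail and keeping only the scalar $\lambda_\theta^{q-q^l}=(-\theta)^{-(q^l-q)/(q-1)}\in K$ in the head, which is slightly cleaner than your bookkeeping for $\lambda_\theta$.
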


\begin{proof}
The case $j=0$ is trivial. So let $j>0$ and let $l\in \NN$ such that $q^l>j$ (e.g.~$l=\lfloor\log_q(j)\rfloor+1$). As $\hdth{i}{f^{q^l}}=0$ for all $f\in \KIsep$, $0<i<q^l$,\footnote{This follows immediately from $\Dth(f^{q^l})=\Dth(f)^{q^l}\in
\KIsep\ps{X^{q^l}}$.} one obtains from the generalized Leibniz rule
\begin{eqnarray*}
\hdth{j}{\Omega^{-1}} &=& \hdth{j}{\lambda_\theta^{q-q^l} \prod_{i=1}^{l-1} \left(1-\frac{t}{\theta^{q^i}}\right)^{-1}\cdot \lambda_\theta^{q^l} \prod_{i=l}^\infty \left(1-\frac{t}{\theta^{q^i}}\right)^{-1} }\\
&=& \hdth{j}{\lambda_\theta^{q-q^l} \prod_{i=1}^{l-1} \left(1-\frac{t}{\theta^{q^i}}\right)^{-1} }\cdot \lambda_\theta^{q^l}\prod_{i=l}^\infty \left(1-\frac{t}{\theta^{q^i}}\right)^{-1}\\
&=&   \hdth{j}{\lambda_\theta^{q-q^l} \prod_{i=1}^{l-1} \left(1-\frac{t}{\theta^{q^i}}\right)^{-1} }\cdot \lambda_\theta^{q^l-q} \prod_{i=1}^{l-1} \left(1-\frac{t}{\theta^{q^i}}\right)\cdot \Omega^{-1}.
\end{eqnarray*}
As $q^l-q$ is divisible by $q-1$, we have $\lambda_\theta^{q^l-q}=(-\theta)^{\frac{q^l-q}{q-1}}\in K$, and
hence, $b_j=(-\theta)^{\frac{q^l-q}{q-1}} \prod_{i=1}^{l-1} \left(1-\frac{t}{\theta^{q^i}}\right)\cdot  \hdth{j}{(-\theta)^{\frac{q^l-q}{q-1}} \prod_{i=1}^{l-1} \left(1-\frac{t}{\theta^{q^i}}\right)^{-1} }\in K[t]_{(t-\theta)}$.
Indeed, we can write $b_j$ more simply, since
\begin{eqnarray*}
\prod_{i=1}^{l-1} \left(1-\frac{t}{\theta^{q^i}}\right) &=& \prod_{i=1}^{l-1} \frac{\theta^{q^i}-t}{\theta^{q^i}} \\
&=& \frac{\prod_{i=1}^{l-1} (\theta^{q^i}-t)}{\theta^{q+\ldots +q^{l-1}}} \\
&=& \frac{\prod_{i=1}^{l-1} (\theta^{q^i}-t)}{\theta^{\frac{q^l-q}{q-1}}}.
\end{eqnarray*}
Hence, after canceling out the $(-1)^{\frac{q^l-q}{q-1}}$, we obtain
\[
 b_j = \prod_{i=1}^{l-1} (\theta^{q^i}-t) \cdot \hdth{j}{\prod_{i=1}^{l-1} (\theta^{q^i}-t)^{-1} }.
\qedhere \]
\end{proof}

The connection of the hyperderivatives $\hdt{n}{\Omega^{-1}}$ with the hyperderivatives $\hdth{n}{\pitilde}$ is given by the
following proposition.

\begin{prop}\label{prop:same-span}
For all $n\geq 0$, the sum $\hdth{n}{\pitilde}+\hdt{n}{\Omega^{-1}}|_{t=\theta}$ lies in the $K$-span of the
elements $\hdt{j}{\Omega^{-1}}|_{t=\theta}$ for $0\leq j<n$. 
\end{prop}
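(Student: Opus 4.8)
The proof rests on the two lemmas just established. The plan is to apply the composition rule of Lemma~\ref{lem:composition-rules}(2) to the function $f(\theta,t)=\Omega^{-1}$. Since $\Omega(\theta)=-\tfrac1{\pitilde}$ we have $f(\theta,\theta)=\Omega^{-1}|_{t=\theta}=-\pitilde$, so that rule reads
\[
 -\hdth{n}{\pitilde}\;=\;\hdth{n}{\Omega^{-1}|_{t=\theta}}\;=\;\sum_{i+j=n}\hdt{i}{\hdth{j}{\Omega^{-1}}}\big|_{t=\theta}.
\]
Into the right-hand side I would then substitute $\hdth{j}{\Omega^{-1}}=b_j\,\Omega^{-1}$ with $b_j\in K(t)$ and $b_0=1$, from Lemma~\ref{lem:theta-hyperderivatives-of-Omega}, and expand each $\hdt{i}{b_j\,\Omega^{-1}}$ by the generalized Leibniz rule for the higher derivation $(\hdte{k})_{k\ge0}$.

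After this substitution the right-hand side becomes, after collecting summands according to the order of the $t$-hyperderivative hitting $\Omega^{-1}$,
\[
 \sum_{i+j=n}\ \sum_{k+l=i}\hdt{k}{b_j}\big|_{t=\theta}\cdot\hdt{l}{\Omega^{-1}}\big|_{t=\theta}\;=\;\sum_{l=0}^{n}\Big(\sum_{j+k=n-l}\hdt{k}{b_j}\big|_{t=\theta}\Big)\,\hdt{l}{\Omega^{-1}}\big|_{t=\theta}.
\]
The term $l=n$ forces $j=k=0$, so its coefficient is $b_0|_{t=\theta}=1$; that term is exactly $\hdt{n}{\Omega^{-1}}|_{t=\theta}$. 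For every $l<n$ the coefficient $\sum_{j+k=n-l}\hdt{k}{b_j}|_{t=\theta}$ lies in $K$: each $b_j$ is a rational function in $t$ over $K$ whose poles sit at the points $t=\theta^{q^i}$ with $i\ge1$, so $b_j$ and all its $t$-hyperderivatives are regular at $t=\theta$ and specialize there to elements of $K$. Transposing the $l=n$ term to the left then gives
\[
 \hdth{n}{\pitilde}+\hdt{n}{\Omega^{-1}}\big|_{t=\theta}\;=\;-\sum_{l=0}^{n-1}\Big(\sum_{j+k=n-l}\hdt{k}{b_j}\big|_{t=\theta}\Big)\,\hdt{l}{\Omega^{-1}}\big|_{t=\theta},
\]
which displays the left-hand side as a $K$-linear combination of the $\hdt{l}{\Omega^{-1}}|_{t=\theta}$ with $0\le l<n$, as claimed.

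I do not anticipate a genuine obstacle: the statement is a finite bookkeeping computation once Lemmas~\ref{lem:composition-rules} and~\ref{lem:theta-hyperderivatives-of-Omega} are in hand. The only points worth a word are that the formal manipulations are licensed by the framework — the Leibniz expansion and the interchange of $\hdte{k}$ with the specialization $t=\theta$ are exactly the content of Proposition~\ref{prop:unique-extensions} and Lemma~\ref{lem:composition-rules}(1) — and that the scalars $\hdt{k}{b_j}|_{t=\theta}$ really do lie in $K$, for which one simply reads off the poles from the explicit form of $b_j$ in Lemma~\ref{lem:theta-hyperderivatives-of-Omega}. One could instead run the same argument with the matrices $\rho_{\theta,[n]}$ and $\rho_{t,[n]}$, which merely repackages these identities, but the scalar version above seems the most transparent.
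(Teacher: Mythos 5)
Your proof is correct and follows the same route as the paper: apply Lemma~\ref{lem:composition-rules}(2) to $\Omega^{-1}$, substitute $\hdth{j}{\Omega^{-1}}=b_j\Omega^{-1}$ from Lemma~\ref{lem:theta-hyperderivatives-of-Omega}, expand by the Leibniz rule, and isolate the $l=n$ term. You even make explicit a point the paper leaves implicit, namely that the scalars $\hdt{k}{b_j}|_{t=\theta}$ lie in $K$ because the poles of $b_j$ sit at $t=\theta^{q^i}$ with $i\geq 1$.
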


\begin{proof} Combining Lemma \ref{lem:composition-rules} and Lemma \ref{lem:theta-hyperderivatives-of-Omega}, and the identity $\Omega^{-1}|_{t=\theta}=-\pitilde$, one computes:
\begin{eqnarray*}
-\hdth{n}{\pitilde} &=& \sum_{i+j=n} \hdt{i}{\hdth{j}{\Omega^{-1}}}|_{t=\theta} \\
&=&  \sum_{i+j=n}\hdt{i}{b_j\cdot \Omega^{-1}}|_{t=\theta} \\
&=& \left(\sum_{i+j=n}\sum_{i_1+i_2=i} \hdt{i_1}{b_j}\hdt{i_2}{ \Omega^{-1}}\right)|_{t=\theta} \\
&=& \hdt{n}{\Omega^{-1}}|_{t=\theta} + \left(\sum_{i_2<n} \bigl(\sum_{i_1+j=n-i_2} \hdt{i_1}{b_j}\bigr)\hdt{i_2}{ \Omega^{-1}}\right)|_{t=\theta}.
\end{eqnarray*}
\end{proof}

\begin{rem}\label{rem:first-explicit-expression}
We can rewrite the last line using $\Dt$ and $\Dth$. For that we write
\[ B(X):=\Dt(\Omega^{-1})\cdot \Omega = \sum_{j=0}^\infty b_jX^j \in K[t]_{(t-\theta)}\ps{X},\] where $b_j\in K[t]_{(t-\theta)}$ as in Lemma \ref{lem:theta-hyperderivatives-of-Omega}. Then:
\begin{eqnarray*} 
-\Dth(\pitilde) &=& \Dt(\Dth(\Omega^{-1}))|_{t=\theta} = \Dt(B(X)\cdot \Omega^{-1})|_{t=\theta} \\
&=& \Dt(B(X)) \cdot \Dt(\Omega^{-1})|_{t=\theta}.
\end{eqnarray*}
\end{rem}

\begin{proof}[Proof of Thm.~\ref{thm:pitilde-hypertrancendental}]
For the proof, we show that for any $n\geq 0$, the transcendence degree of 
$\bar{K}(\pitilde, \hdth{1}{\pitilde},\ldots, \hdth{n}{\pitilde})$ over $\bar{K}$ is $n+1$.

By \cite[Prop.~6.2]{am:ptmaip}, the matrix
\[   \Psi = \rho_{t,[n]}(\Omega) =  \begin{pmatrix}
\Omega & \hdt{1}{\Omega}  & \hdt{2}{\Omega} & \cdots&\hdt{n}{\Omega} \\
0 & \Omega & \hdt{1}{\Omega} &  \ddots   & \vdots \\ 
\vdots &\ddots  & \ddots & \ddots &   \hdt{2}{\Omega}\\
\vdots & & \ddots  & \Omega &  \hdt{1}{\Omega} \\
0 &  \cdots & \cdots & 0 &  \Omega
\end{pmatrix}  \]
is the rigid analytic trivialization of the dual $t$-motive corresponding to the $n$-th prolongation of the Carlitz module.\\
The transcendence degree of $\bar{K}(t)(\Psi)$ over $\bar{K}(t)$ is the same as the one when adjoining to $\bar{K}(t)$ the entries
of \footnote{The letter $\omega$ denotes the Anderson-Thakur function which is related to $\Omega$ by $\omega=\frac{1}{(t-\theta)\Omega}$.}
\[\rho_{t,[n]}(\omega)=\rho_{t,[n]}\left(\frac{1}{(t-\theta)\Omega}\right)=\rho_{t,[n]}(t-\theta)^{-1}\cdot \Psi^{-1}.\]
This transcendence degree equals $n+1$ by \cite[Thm.~7.2]{am:ptmaip}.

By a theorem of Papanikolas (cf.~\cite[Thm.~5.2.2]{mp:tdadmaicl} and its proof; see also a refinement of Chang in \cite[Thm.~1.2(2)]{cc:nrvabpc}), the transcendence degree of $\bar{K}(t)(\Psi)$ over $\bar{K}(t)$ is the same as the transcendence degree of $\bar{K}(\Psi(\theta))$ over $\bar{K}$.
The latter is the same when adjoining the entries of 
\[ \Psi(\theta)^{-1}= \rho_{t,[n]}(\Omega^{-1})|_{t=\theta}, \]
i.e. adjoining the elements $\Omega^{-1}|_{t=\theta}, \hdt{1}{\Omega^{-1}}|_{t=\theta}, \ldots, \hdt{n}{\Omega^{-1}}|_{t=\theta}$.
Finally by Prop.~\ref{prop:same-span}, the elements $\pitilde, \hdth{1}{\pitilde},\ldots, \hdth{n}{\pitilde}$ span the same 
$\bar{K}$-vector space, hence the fields generated by these sets of elements are the same, and we conclude that the transcendence degree of 
$\bar{K}(\pitilde, \hdth{1}{\pitilde},\ldots, \hdth{n}{\pitilde})$ over $\bar{K}$ is indeed $n+1$.
\end{proof}

\section{Period coordinates of Carlitz tensor powers}\label{sec:tensor-power}

Let $n$ be a positive integer, and let $C^{\otimes n}$ denote the $n$-th Carlitz tensor power.
In \cite{am:ptmaip}, we showed an algebraic independence result for the coordinates of a fundamental period $\pitilde_n=\transp{(z_1,\ldots, z_n)}$ of $C^{\otimes n}$.
It was obtained in a similar manner as for the hyperderivatives of $\pitilde$ in the previous section. Besides using Papanikolas' theorem \cite[Thm.~5.2.2]{mp:tdadmaicl}, the main point was the following identity, where we chose the fundamental period such that its last coordinate $z_n$ equals $\pitilde^n$:

\begin{equation}\label{eq:omega-tensor-power}
\rho_{t,[n-1]}(\Omega^{-n})|_{t=\theta} 
= (-1)^n \cdot\begin{pmatrix}
  z_n &   z_{n-1}  &   z_{n-2} & \cdots &  z_1 \\
0 &   z_n &    z_{n-1} &  \ddots   & \vdots \\ 
\vdots &\ddots  & \ddots & \ddots &     z_{n-2}\\
\vdots & & \ddots  &    z_n &    z_{n-1} \\
0 &  \cdots & \cdots & 0 &    z_n
\end{pmatrix}.
\end{equation}

As one can already imagine, this provides a link between these coordinates and the hyperderivatives of $\pitilde$.

\begin{thm}\label{thm:coordinates-in-k-space}
The coordinates $z_1,\ldots, z_n$ belong to the $K$-vector space generated by the set of ``monomials''
$\left\{ \prod_{j=1}^n \hdth{m_j}{\pitilde} \,\middle|\, \forall j: 0\leq m_j\leq n-1 \right\}. $
\end{thm}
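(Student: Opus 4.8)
The plan is to extract the statement directly from the matrix identity~\eqref{eq:omega-tensor-power} together with the analysis of $\rho_{t,[n-1]}(\Omega^{-1})$ already carried out in Section~\ref{sec:hypertranscendence}. The starting observation is that $\Omega^{-n} = (\Omega^{-1})^n$, so that the Taylor expansion $\Dt(\Omega^{-n}) = \Dt(\Omega^{-1})^n$ is the $n$-th power (in $K(t)\ps{X}$, after multiplying by the appropriate scalar) of $\Dt(\Omega^{-1})$. Equivalently, passing to the matrix picture via $X \mapsto N$ with $N$ the $n\times n$ nilpotent Jordan block, one has $\rho_{t,[n-1]}(\Omega^{-n}) = \rho_{t,[n-1]}(\Omega^{-1})^n$ as an identity of matrices over $K(t)\ps{t}$ — and then over $\KI(\lambda_\theta)$ after specializing $t=\theta$. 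Reading off the $(1,k)$-entry on the left via~\eqref{eq:omega-tensor-power} gives $(-1)^n z_{n-k+1}$, while the same entry on the right is a sum, over all ways of writing $k-1 = i_1 + \dots + i_n$ with each $i_r \ge 0$, of products $\prod_{r=1}^n \hdt{i_r}{\Omega^{-1}}\big|_{t=\theta}$.

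First I would state and prove the matrix-power identity $\rho_{[n-1]}(f^n) = \rho_{[n-1]}(f)^n$ for any higher derivation (it is immediate from the fact that $\rho_{[n-1]}$ is a ring homomorphism, which is already recorded in the excerpt), and specialize it to $f = \Omega^{-1}$ with the $t$-hyperdifferential operators. Second I would combine this with~\eqref{eq:omega-tensor-power} to obtain, for each $k=1,\dots,n$,
\[
 z_{n-j} = (-1)^n \sum_{i_1+\dots+i_n = j} \prod_{r=1}^n \hdt{i_r}{\Omega^{-1}}\Big|_{t=\theta},
\]
where $j = k-1$ ranges over $0,\dots,n-1$; in particular each $i_r \le j \le n-1$. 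Third — and this is where Proposition~\ref{prop:same-span} does the real work — I would invoke the fact established there that each $\hdt{i}{\Omega^{-1}}|_{t=\theta}$, for $0 \le i \le n-1$, lies in the $K$-span of $\{\pitilde, \hdth{1}{\pitilde}, \dots, \hdth{i}{\pitilde}\}$. (Strictly, Prop.~\ref{prop:same-span} shows the two families span the same $K$-space through degree $n-1$; the triangular shape of the relations — $\hdth{n}{\pitilde} + \hdt{n}{\Omega^{-1}}|_{t=\theta}$ lies in the span of the lower ones — lets one solve recursively, so that $\hdt{i}{\Omega^{-1}}|_{t=\theta} \in \sum_{m=0}^{i} K\cdot \hdth{m}{\pitilde}$.) Substituting this into the product $\prod_{r=1}^n \hdt{i_r}{\Omega^{-1}}|_{t=\theta}$ and expanding multilinearly expresses $z_{n-j}$ as a $K$-linear combination of monomials $\prod_{r=1}^n \hdth{m_r}{\pitilde}$ with $0 \le m_r \le i_r \le n-1$, which is exactly the claimed membership.

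I expect the only genuine subtlety to be the bookkeeping in the third step: Proposition~\ref{prop:same-span} is phrased as a single relation per degree rather than as the explicit triangular substitution, so I would either add a one-line corollary making the recursive solution explicit, or simply note that ``same $K$-span in each degree $\le n-1$'' is all that is needed, since a product of elements each lying in $\mathrm{span}_K\{\pitilde,\dots,\hdth{n-1}{\pitilde}\}$ automatically lies in $\mathrm{span}_K$ of the degree-$(\le n-1)$ monomials. The degree bound $m_r \le n-1$ is the one point to check carefully, and it follows because each $i_r$ in a partition of $j \le n-1$ is itself at most $n-1$. Everything else is routine: the homomorphism property of $\rho_{[n-1]}$, the identity $\Omega^{-n}|_{t=\theta} = (-\pitilde)^n$ for the consistency check, and the multilinear expansion. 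I would close by remarking that the same computation, read off entry by entry rather than just in the first row, will be reused in Section~\ref{sec:nicer-expressions} to get the explicit formulae of parts~(2) and~(3) of the second theorem.
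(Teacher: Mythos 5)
Your proposal is correct and follows essentially the same route as the paper: both start from the identity \eqref{eq:omega-tensor-power}, use multiplicativity of $\Dt$ (equivalently of $\rho_{t,[n-1]}$) to reduce to the $n$-th power of $\Dt(\Omega^{-1})|_{t=\theta}$, and then convert the factors $\hdt{i}{\Omega^{-1}}|_{t=\theta}$ into $K$-combinations of $\hdth{m}{\pitilde}$ via Proposition~\ref{prop:same-span}. The only cosmetic difference is that the paper packages the unitriangular base change as multiplication by the power series $\Dt(B(X))^{-n}|_{t=\theta}$ with coefficients in $K$, whereas you invert the triangular relations of Proposition~\ref{prop:same-span} recursively and expand multilinearly; the content is identical.
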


\begin{proof}
Equation \eqref{eq:omega-tensor-power} can be rewritten as:
\[ z_n+z_{n-1}X+\ldots + z_1X^{n-1} \equiv (-1)^n \Dt(\Omega^{-n})|_{t=\theta}  \mod{X^n}. \]
By Remark \ref{rem:first-explicit-expression}, we further have:
\begin{eqnarray*}
(-1)^n \Dt(\Omega^{-n})|_{t=\theta} &=& \Dt(-\Omega^{-1})^n|_{t=\theta} \\
&=& \Dt(B(X))^{-n}\Dth(\pitilde)^n|_{t=\theta} \\
&=& \Dt(B(X))^{-n}|_{t=\theta}\cdot \Dth(\pitilde)^n
\end{eqnarray*}
As the coefficients of $B(X)$ are in $K[t]_{(t-\theta)}$, hence $\Dt(B(X))^{-n}|_{t=\theta}\in K\ps{X}$, and as the non-zero entries of $\Dth(\pitilde)$ up to the coefficients of $X^{n-1}$ are
the elements $\pitilde, \hdth{1}{\pitilde},\ldots,\hdth{n-1}{\pitilde}$, this completes the proof.
\end{proof}

\section{Expressions for the period coordinates}\label{sec:nicer-expressions}

We are going to derive nicer expressions for the coefficients $\Dt(B(X)^{-n})|_{t=\theta}$ occurring in the proof of Theorem \ref{thm:coordinates-in-k-space}. 
We start by deriving a nicer expression for $\Dt(B(X))|_{t=\theta} \mod{X^n}$.

\begin{defn}\label{def:eta}
For $l\geq 0$, we let 
\[ L_l=\prod_{m=1}^l (\theta^{q^m}-\theta)\in K\quad \text{and}\quad
\cL_{l}(t)=\prod_{m=1}^{l} (\theta^{q^m}-t)\in K[t].\]
Furthermore, we define the elements
\[ \eta_l(t)=\prod_{m=1}^l \frac{t^{q^m}-\theta}{\theta^{q^m}-\theta} = \prod_{m=1}^l \left( 1+\frac{(t-\theta)^{q^m}}{\theta^{q^m}-\theta}\right) \in K[t]_{(t-\theta)}, \]
as well as their limit in $K\cs{t-\theta}$,
\[ \eta(t):= \lim_{l\to \infty} \eta_l(t) 
= \prod_{m=1}^\infty \left( 1+\frac{(t-\theta)^{q^m}}{\theta^{q^m}-\theta}\right)\in K\cs{t-\theta}.
\]
\end{defn}

\begin{rem}
The element $\eta(t)$ is the inverse of $\xi_C(t)$ in \cite{mp:latpcmsvgl} where the formula above was a consequence of a different definition (cf.~\cite[Prop.~7.2.2]{mp:latpcmsvgl}). Papanikolas' definition in terms of $\eta(t)$ merely is
\[ \eta(t)=\frac{\Omega(t,\theta)}{\Omega(\theta,\theta)}=-\pitilde\cdot \Omega(t,\theta) \]
viewed in some appropriate ring. This definition
shows its close connection to the Carlitz module. Note that the roles of $t$ and $\theta$ in the numerator are interchanged in that definition.

Using Equation \eqref{eq:omega-tensor-power} and a swap of $t$ and $\theta$, Papanikolas deduced from this definition the equation
\[ z_{n-j} = \hdth{j}{\eta^{-n}\cdot \pitilde^n}|_{t=\theta}. \]
This is the limit version of our identity in Corollary \ref{cor:nice-expression} below.
\end{rem}

\begin{lem}\label{lem:dth-eta-l}
For all $l\in \NN$:
\[\Dth(\eta_l)|_{t=\theta}= \frac{\Dt(\cL_{l}(t))|_{t=\theta}}{\Dth\left(L_{l}\right)} . \]
\end{lem}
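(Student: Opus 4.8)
The plan is to unwind the definition $\eta_l(t)=\prod_{m=1}^l \frac{t^{q^m}-\theta}{\theta^{q^m}-\theta}$ and separate the numerator, which depends on $t$, from the denominator $L_l=\prod_{m=1}^l(\theta^{q^m}-\theta)$, which lies in $K$. The key observation is that the numerator $\prod_{m=1}^l (t^{q^m}-\theta)$ is almost $\cL_l(t)=\prod_{m=1}^l(\theta^{q^m}-t)$ with the roles of $\theta$ and $t$ swapped in each factor, and that applying $\Dth$ to a function of $\theta$ alone is the same as applying $\Dt$ to the ``same'' function of $t$ and then specialising $t=\theta$.

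First I would write $\eta_l(t) = \frac{\prod_{m=1}^l(t^{q^m}-\theta)}{L_l}$, apply $\Dth$ using the fact that $\Dth$ is an $\Fq$-algebra homomorphism $K(t)\to K(t)\ps{X}$ (so it respects quotients), obtaining
\[ \Dth(\eta_l) = \frac{\Dth\!\left(\prod_{m=1}^l(t^{q^m}-\theta)\right)}{\Dth(L_l)}. \]
Here $\Dth$ acts on the numerator only through the $-\theta$ summands (and through nothing in the pure $t$-powers $t^{q^m}$, since $\Dth(t)=t$), so $\Dth\!\left(\prod_m(t^{q^m}-\theta)\right) = \prod_m(t^{q^m}-\theta-X)$. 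Then I would specialise $t=\theta$: the numerator becomes $\prod_m(\theta^{q^m}-\theta-X)$. On the other side, $\Dt(\cL_l(t))|_{t=\theta} = \Dt\!\left(\prod_m(\theta^{q^m}-t)\right)|_{t=\theta} = \prod_m(\theta^{q^m}-t-X)|_{t=\theta} = \prod_m(\theta^{q^m}-\theta-X)$, using that $\Dt$ replaces $t$ by $t+X$. These two expressions agree, which gives the claimed identity.

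Alternatively, and perhaps more cleanly, I would invoke Lemma~\ref{lem:composition-rules}(2) directly. Write $\eta_l(t) = f(\theta,t)$ for the rational function $f(\theta,t)=\prod_{m=1}^l\frac{t^{q^m}-\theta}{\theta^{q^m}-\theta}$, but it is cleaner to note the numerator $P(\theta,t):=\prod_m(t^{q^m}-\theta)$ satisfies $P(\theta,\theta) = \prod_m(\theta^{q^m}-\theta) = L_l$ up to the sign $(-1)^l$, and that $\Dth$ applied to $P(\theta,\theta)/L_l$-type quantities can be re-expressed via the composition rule. Concretely the content is: $\Dth$ of something evaluated at $t=\theta$ equals $\Dt\Dth$ of the unspecialised thing, then set $t=\theta$. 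Since $P(\theta,t)$ has $\Dt$-expansion $\prod_m(t^{q^m}-\theta)$ unchanged in the relevant sense and $\Dth P = \prod_m(t^{q^m}-\theta-X)$, one lands on the same formula; dividing by $\Dth(L_l)$ throughout (noting $L_l$ depends on $\theta$ alone, so $\Dth(L_l)$ is the natural denominator) yields the statement.

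I do not anticipate a serious obstacle here: the only thing to be careful about is bookkeeping the sign $(-1)^l$ between $\prod_m(t^{q^m}-\theta)$ and $\cL_l(t)=\prod_m(\theta^{q^m}-t)$ and checking it cancels between numerator and the corresponding sign in $L_l$ versus $\Dth(L_l)$ — but in fact, after evaluating at $t=\theta$, both $\Dth(\eta_l)|_{t=\theta}$ and $\Dt(\cL_l)|_{t=\theta}/\Dth(L_l)$ are built from the \emph{same} factors $(\theta^{q^m}-\theta-X)$ over $\Dth(L_l) = \prod_m\Dth(\theta^{q^m}-\theta)$ — so the identity is really a factor-by-factor equality $\Dth\!\left(\frac{\theta^{q^m}-\theta}{\theta^{q^m}-\theta}\right)$-style rearrangement and the signs take care of themselves. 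The one genuine point to state explicitly is that $\Dth$ and $\Dt$ are multiplicative (algebra homomorphisms into the respective power series rings), which is exactly what was recorded in Section~\ref{sec:basics}.
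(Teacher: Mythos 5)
Your proposal is correct and is essentially the paper's own argument: both sides are computed explicitly (using that $\Dth$ and $\Dt$ are algebra homomorphisms, so one may work factor by factor) and both come out as $\prod_{m=1}^l \bigl(\theta^{q^m}-\theta-X\bigr)\big/\bigl(\theta^{q^m}+X^{q^m}-\theta-X\bigr)$, the only difference being that you separate numerator and denominator first while the paper substitutes $\theta\mapsto\theta+X$ in the whole fraction at once. Your closing remark is also right that no sign $(-1)^l$ ever enters, since $\Dth(t^{q^m}-\theta)|_{t=\theta}$ and $\Dt(\theta^{q^m}-t)|_{t=\theta}$ both equal $\theta^{q^m}-\theta-X$ directly.
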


\begin{proof}
On one hand, we have
\begin{eqnarray*}
 \Dth(\eta_l)|_{t=\theta}&=&  \Dth\left(\prod_{m=1}^l \frac{t^{q^m}-\theta}{\theta^{q^m}-\theta}\right)\bigg|_{t=\theta}
= \prod_{m=1}^l \frac{t^{q^m}-(\theta+X)}{(\theta+X)^{q^m}-(\theta+X)}\bigg|_{t=\theta} \\
&=& \prod_{m=1}^l \frac{\theta^{q^m}-\theta-X}{\theta^{q^m}+X^{q^m}-\theta-X} .
\end{eqnarray*}
On the other hand,
\begin{eqnarray*}
\frac{\Dt(\cL_{l}(t))|_{t=\theta}}{\Dth\left(L_{l}\right)} &=& \frac{\prod_{i=1}^{l} \Dt(\theta^{q^i}-t)}{\prod_{i=1}^{l} \Dth(\theta^{q^i}-\theta)}\bigg|_{t=\theta} 
 = \frac{\prod_{i=1}^{l} \left(\theta^{q^i}-(t+X)\right)}{\prod_{i=1}^{l} \left((\theta+X)^{q^i}-\theta-X\right)}\bigg|_{t=\theta} \\
 &=&  \prod_{m=1}^l \frac{\theta^{q^m}-\theta-X}{\theta^{q^m}+X^{q^m}-\theta-X}.
\end{eqnarray*}
\end{proof}

\begin{prop}
If $q^l\geq n$, then
\begin{eqnarray*} \Dt(B(X))|_{t=\theta} &\equiv & 
\Dth(\eta_{l-1})|_{t=\theta}  \mod{X^n}.  
\end{eqnarray*}
\end{prop}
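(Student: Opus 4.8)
The plan is to trace through the definition of $B(X)=\Dt(\Omega^{-1})\cdot\Omega$ and rewrite everything in terms of the finite product $\eta_{l-1}$, using the factorization of $\Omega^{-1}$ that already appeared in the proof of Lemma~\ref{lem:theta-hyperderivatives-of-Omega}. Recall from that proof (with the roles of $l$ shifted by one to match the present bound $q^l\geq n$) that we may split
\[ \Omega^{-1}=\lambda_\theta^{q-q^{l}}\prod_{i=1}^{l-1}\Bigl(1-\tfrac{t}{\theta^{q^i}}\Bigr)^{-1}\cdot\lambda_\theta^{q^{l}}\prod_{i=l}^{\infty}\Bigl(1-\tfrac{t}{\theta^{q^i}}\Bigr)^{-1}, \]
and that the second factor, call it $g(t):=\lambda_\theta^{q^{l}}\prod_{i\geq l}(1-t/\theta^{q^i})^{-1}$, is a $q^{l}$-th power as a function of $t$ in the sense that $\Dt(g)\in\CC_\infty(\!(t)\!)(\lambda_\theta)\ps{X^{q^{l}}}$ — indeed each $1-t/\theta^{q^i}$ with $i\geq l$ is a $q^{l}$-th power of $1-t^{1/q^{l}}/\theta^{q^{i-l}}$ after a formal substitution, or more simply $\Dt$ applied to $h(t^{q^l})$ lands in $\ps{X^{q^l}}$. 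Hence modulo $X^n$ (and $n\leq q^{l}$) the factor $g$ is $\Dt$-invariant: $\Dt(g)\equiv g\pmod{X^n}$, so it cancels between $\Dt(\Omega^{-1})$ and the $\Omega$ in $B(X)$ up to the rational prefactor.

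With that cancellation, first I would show
\[ B(X)=\Dt(\Omega^{-1})\cdot\Omega\equiv \Dt\!\left(\prod_{i=1}^{l-1}\Bigl(1-\tfrac{t}{\theta^{q^i}}\Bigr)^{-1}\right)\cdot\prod_{i=1}^{l-1}\Bigl(1-\tfrac{t}{\theta^{q^i}}\Bigr)\pmod{X^n}, \]
the $\lambda_\theta$-powers being constants in $t$ and hence $\Dt$-invariant, and the infinite tail $g$ contributing $\Dt(g)\cdot g^{-1}\equiv 1$. Now apply $\Dth$ and set $t=\theta$. Using Lemma~\ref{lem:composition-rules}(1) the operators $\Dt$ and $\Dth$ commute, so $\Dt(B(X))|_{t=\theta}$ equals, modulo $X^n$, the product $\Dt\bigl(\Dth(\prod_{i=1}^{l-1}(1-t/\theta^{q^i})^{-1})\bigr)\cdot\Dth(\prod_{i=1}^{l-1}(1-t/\theta^{q^i}))\big|_{t=\theta}$; but by the multiplicativity of $\Dth$ and the observation in the proof of Lemma~\ref{lem:dth-eta-l} this is exactly the $\Dt$-Taylor expansion at $t=\theta$ of $\Dth$ applied to the rational function whose value we are after. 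The cleanest route is to identify the right-hand side directly: since $\prod_{i=1}^{l-1}(1-t/\theta^{q^i})^{-1}=\theta^{(q^{l}-q)/(q-1)}\cL_{l-1}(t)^{-1}$ up to the computation already done in Lemma~\ref{lem:theta-hyperderivatives-of-Omega}, and $\eta_{l-1}(t)=\cL_{l-1}(t)\big/\bigl(\text{its value pattern}\bigr)$, one is reduced to matching $B(X)$ against $\Dth(\eta_{l-1})|_{t=\theta}$ via Lemma~\ref{lem:dth-eta-l}.

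More concretely, I would argue as follows. By Lemma~\ref{lem:dth-eta-l}, $\Dth(\eta_{l-1})|_{t=\theta}=\Dt(\cL_{l-1}(t))|_{t=\theta}\big/\Dth(L_{l-1})$. On the other hand, from $\Omega^{-1}=\lambda_\theta^{q-q^{l}}\,\theta^{-(q^{l}-q)/(q-1)}\,\cL_{l-1}(t)^{-1}\cdot g(t)$ and the fact that $\lambda_\theta^{q-q^{l}}\theta^{-(q^{l}-q)/(q-1)}=(-1)^{(q^{l}-q)/(q-1)}L_{l-1}^{?}$ — here I would just carry the scalar $c:=\lambda_\theta^{q-q^{l}}\theta^{-(q^{l}-q)/(q-1)}\in K$ along symbolically — we get $B(X)=\Dt(c\,\cL_{l-1}(t)^{-1}g(t))\cdot c^{-1}\cL_{l-1}(t)g(t)^{-1}$, and since $c$ is constant in $t$ and $\Dt(g)\equiv g\pmod{X^n}$, this is $\equiv \Dt(\cL_{l-1}(t)^{-1})\cdot\cL_{l-1}(t)\pmod{X^n}$. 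It remains to check that this last expression, after also forming it the way the $\eta$-quotient does, agrees with $\Dth(\eta_{l-1})|_{t=\theta}$: but $\Dth(\eta_{l-1})|_{t=\theta}=\prod_{m=1}^{l-1}\frac{\theta^{q^m}-\theta-X}{\theta^{q^m}+X^{q^m}-\theta-X}$ from the proof of Lemma~\ref{lem:dth-eta-l}, and since $X^{q^m}\equiv 0\pmod{X^n}$ for $m\leq l-1$ (as $q^{m}\geq q\geq$ well — one needs $q^{l-1}$ possibly $<n$, so this is the delicate point), the denominators collapse and one recognizes $\Dt(\cL_{l-1})|_{t=\theta}\cdot\cL_{l-1}^{-1}$ exactly.

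The main obstacle is the bookkeeping of the truncation bound: the hypothesis is $q^l\geq n$, so exponents $X^{q^i}$ with $i\leq l-1$ are \emph{not} automatically $\equiv 0\pmod{X^n}$, and one must be careful that the only genuinely vanishing-mod-$X^n$ contributions come from the infinite tail $g$ (where exponents are $\geq q^{l}\geq n$) and not from the finite $\cL_{l-1}$ factor. I expect the cleanest way to handle this is to avoid truncating inside $\cL_{l-1}$ at all: keep the identity $B(X)=\Dt(c\,\cL_{l-1}^{-1})\cdot c^{-1}\cL_{l-1}\cdot\bigl(\Dt(g)g^{-1}\bigr)$ exact, note $\Dt(g)g^{-1}\equiv 1\pmod{X^n}$ is the \emph{only} approximation used, then apply $\Dth(-)|_{t=\theta}$ and invoke Lemma~\ref{lem:dth-eta-l} together with the commutation in Lemma~\ref{lem:composition-rules}(1) to conclude $\Dt(B(X))|_{t=\theta}\equiv\Dth(\eta_{l-1})|_{t=\theta}\pmod{X^n}$, with no further truncation of the finite product required.
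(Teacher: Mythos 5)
There is a genuine gap, and it sits at the heart of your argument: the claim that the tail $g(t)=\lambda_\theta^{q^l}\prod_{i\geq l}(1-t/\theta^{q^i})^{-1}$ satisfies $\Dt(g)\equiv g\pmod{X^n}$ is false. Each factor $1-t/\theta^{q^i}$ is \emph{linear} in $t$, so $\hdt{1}{g}=g\cdot\sum_{i\geq l}(\theta^{q^i}-t)^{-1}\neq 0$; the element $1-t^{1/q^l}/\theta^{q^{i-l}}$ whose $q^l$-th power you invoke does not lie in a ring on which $\Dt$ acts, so the principle ``$\hde{j}$ kills $q^l$-th powers for $0<j<q^l$'' does not apply. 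The invariance actually used in Lemma~\ref{lem:theta-hyperderivatives-of-Omega} is with respect to $\theta$: the tail involves $\theta$ only through $\lambda_\theta^{q^l}$ and $\theta^{q^i}=(\theta^{q^{i-l}})^{q^l}$ for $i\geq l$, and $\hdthe{j}$ is $t$-linear, whence $\Dth(g)\equiv g\pmod{X^{q^l}}$. Correspondingly, $B(X)$ is $\Dth(\Omega^{-1})\cdot\Omega$ (the ``$\Dt$'' in its displayed definition is a typo --- its coefficients are the $b_j$ of Lemma~\ref{lem:theta-hyperderivatives-of-Omega}, defined by $\hdth{j}{\Omega^{-1}}=b_j\Omega^{-1}$), and the correct congruence, read off directly from Equation~\eqref{eq:b_j-simpler} (which holds with the single $l$ for every $j<n$, since $q^l\geq n>j$), is $B(X)\equiv\cL_{l-1}(t)\cdot\Dth(\cL_{l-1}(t))^{-1}\pmod{X^n}$. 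Your version $B(X)\equiv\Dt(\cL_{l-1}^{-1})\cdot\cL_{l-1}$ carries the wrong operator and is a genuinely different series.

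This mismatch is exactly what surfaces as your ``delicate point''. The target $\Dth(\eta_{l-1})|_{t=\theta}=\prod_{m=1}^{l-1}\frac{\theta^{q^m}-\theta-X}{\theta^{q^m}+X^{q^m}-\theta-X}$ really does contain the terms $X^{q^m}$ with $q^m<n$, and they arise precisely from applying $\Dt(-)|_{t=\theta}$ to $\Dth(\cL_{l-1})^{-1}$: one has $\Dt(\Dth(\cL_{l-1}))|_{t=\theta}=\prod_{m}\bigl((\theta+X)^{q^m}-\theta-X\bigr)=\Dth(L_{l-1})$, which supplies those denominators, while $\Dt(\cL_{l-1})|_{t=\theta}=\prod_{m}(\theta^{q^m}-\theta-X)$ supplies the numerators, and Lemma~\ref{lem:dth-eta-l} then finishes the proof. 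Your computation never produces the $X^{q^m}$ in the denominator (applying $\Dth(-)|_{t=\theta}$ to your expression puts them in the numerator instead, and applying $\Dt(-)|_{t=\theta}$ yields $\prod_m\frac{\theta^{q^m}-\theta-X}{\theta^{q^m}-\theta-2X}$), so no bookkeeping of truncations can close the gap: the argument must start from the $\theta$-hyperderivative factorization of $\Omega^{-1}$, not the $t$-hyperderivative one.
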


\begin{proof}

Let $l\in \NN$ such that $q^l\geq n$. Then from Equation \eqref{eq:b_j-simpler}, we see that
\[ B(X) \equiv \cL_{l-1}(t)\cdot \Dth(\cL_{l-1}(t)^{-1})= \cL_{l-1}(t)\cdot \Dth(\cL_{l-1}(t))^{-1} \mod{X^n},\]
and hence modulo $X^n$ we obtain
\begin{eqnarray*}
 \Dt(B(X))|_{t=\theta} &\equiv & \Dt(\cL_{l-1}(t))|_{t=\theta} \cdot \Dt(\Dth(\cL_{l-1}(t)))^{-1}|_{t=\theta} \\
 &=&  \frac{\Dt(\cL_{l-1}(t))|_{t=\theta}}{\Dth\left(L_{l-1}\right)} =\Dth(\eta_{l-1})|_{t=\theta}, 
\end{eqnarray*}
where we used  Lemma \ref{lem:composition-rules} in the second last step.
\end{proof}

Applying this congruence to the formula obtained in the proof of Thm.~\ref{thm:coordinates-in-k-space}, we get the following corollary.

\begin{cor}\label{cor:nice-expression}
Let $z_1,\ldots, z_n$ be the coordinates of the period of $C^{\otimes n}$ as in Equation \eqref{eq:omega-tensor-power}, and $l\in \NN$ such that $q^l\geq n$. Then
\[ \begin{pmatrix}
  z_n &   z_{n-1}  &    \cdots &  z_1 \\
0 &   z_n &     \ddots   & \vdots \\ 
\vdots & \ddots  &   \ddots  &    z_{n-1} \\
0 &  \cdots & 0 &    z_n
\end{pmatrix} =\rho_{\theta,[n-1]}\left(\eta_{l-1}^{-1}\pitilde\right)^n|_{t=\theta}
= \rho_{\theta,[n-1]}\left(\eta_{l-1}^{-1}\right)^n|_{t=\theta}\cdot \rho_{\theta,[n-1]}\left(\pitilde\right)^n,
\]
or equivalently for $0\leq j\leq n-1$: 
\[  z_{n-j}= \hdth{j}{\eta_{l-1}^{-n}\pitilde^n}|_{t=\theta}. \]
\end{cor}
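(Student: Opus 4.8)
The plan is to combine the matrix identity from Equation~\eqref{eq:omega-tensor-power}, the factorization derived in the proof of Theorem~\ref{thm:coordinates-in-k-space}, and the congruence $\Dt(B(X))|_{t=\theta}\equiv \Dth(\eta_{l-1})|_{t=\theta} \pmod{X^n}$ just established. Recall from the proof of Theorem~\ref{thm:coordinates-in-k-space} that the generating polynomial of the $z_i$ satisfies
\[ z_n+z_{n-1}X+\ldots+z_1X^{n-1}\equiv \Dt(B(X))^{-n}|_{t=\theta}\cdot \Dth(\pitilde)^n \pmod{X^n}. \]
First I would substitute the congruence $\Dt(B(X))|_{t=\theta}\equiv \Dth(\eta_{l-1})|_{t=\theta}$, which is valid modulo $X^n$ and hence (since we only need $n$ coefficients) can be inverted and raised to the $(-n)$-th power modulo $X^n$, to obtain
\[ z_n+z_{n-1}X+\ldots+z_1X^{n-1}\equiv \Dth(\eta_{l-1})^{-n}|_{t=\theta}\cdot \Dth(\pitilde)^n \pmod{X^n}. \]

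Next I would translate this power-series congruence into the matrix language of $\rho_{\theta,[n-1]}$. The point is that evaluating $X$ at the nilpotent matrix $N$ of size $n\times n$ turns truncation mod $X^n$ into an exact matrix identity, and turns products of power series into products of the corresponding upper-triangular Toeplitz matrices. Since $\Dth$ is a ring homomorphism, $\Dth(\eta_{l-1})^{-n}\cdot\Dth(\pitilde)^n=\Dth(\eta_{l-1}^{-n}\pitilde^n)$, so evaluating at $X=N$ gives $\rho_{\theta,[n-1]}(\eta_{l-1}^{-n}\pitilde^n)|_{t=\theta}$, while the left-hand side becomes exactly the Toeplitz matrix with entries $z_n, z_{n-1},\ldots, z_1$ along the rows. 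One must check that evaluation at $X=N$ commutes appropriately with the specialization $t=\theta$ and with taking inverses; the former is routine since $N$ has entries in $\Fq\subseteq K$, and the latter holds because $\eta_{l-1}(\theta)=1\neq 0$ so $\Dth(\eta_{l-1})|_{t=\theta}$ is a unit in the matrix ring (its constant term is $1$). This also justifies writing the right-hand side as the product $\rho_{\theta,[n-1]}(\eta_{l-1}^{-1})^n|_{t=\theta}\cdot\rho_{\theta,[n-1]}(\pitilde)^n$, again using multiplicativity of $\rho_{\theta,[n-1]}$ and the fact that $\rho_{\theta,[n-1]}(\pitilde)$ does not involve $t$.

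Finally, reading off the top row of the resulting matrix identity gives $z_{n-j}=\hdth{j}{\eta_{l-1}^{-n}\pitilde^n}|_{t=\theta}$ for $0\leq j\leq n-1$, which is the ``equivalently'' form of the statement. I expect the main obstacle to be purely bookkeeping: making sure that the congruences mod $X^n$ interact correctly with inversion and $n$-th powers (so that no contribution from coefficients of $X^{\geq n}$ leaks in), and that the order of the entries $z_n,\ldots,z_1$ matches the sign-free convention forced by writing $\Dt(-\Omega^{-1})^n=\Dt(\Omega^{-1})^n\cdot(-1)^n$ together with the $(-1)^n$ already present in Equation~\eqref{eq:omega-tensor-power}. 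Once the indexing is pinned down, the proof is a direct substitution with no further analytic input.
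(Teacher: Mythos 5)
Your proposal is correct and follows essentially the same route as the paper, which obtains the corollary by substituting the congruence $\Dt(B(X))|_{t=\theta}\equiv\Dth(\eta_{l-1})|_{t=\theta}$ modulo $X^n$ into the identity $z_n+z_{n-1}X+\cdots+z_1X^{n-1}\equiv\Dt(B(X))^{-n}|_{t=\theta}\cdot\Dth(\pitilde)^n$ from the proof of Theorem~\ref{thm:coordinates-in-k-space} and then evaluating at $X=N$. The bookkeeping points you flag --- the unit constant term $\eta_{l-1}|_{t=\theta}=1$ justifying inversion modulo $X^n$, and the cancellation of the two factors $(-1)^n$ --- all check out, so your write-up is a faithful filling-in of the details the paper leaves implicit.
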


Papanikolas discovered another nice expression using Anderson-Thakur polynomials and Carlitz factorials (see Thm.~\ref{thm:papanikolas-identity} or the second equation in Corollary \ref{cor:even-nicer-expression}).
He derived it from the equation that we state as first equation in Corollary \ref{cor:even-nicer-expression}. His proof for that equation, however, is quite long and we managed to shorten it immensely. This is due to the first identity in Thm.~\ref{thm:properties-of-eta} (already stated in the introduction as Thm.~\ref{thm:eta-in-intro}) from which the rest is deduced almost instantly.
Nevertheless, the proof of Thm.~\ref{thm:properties-of-eta} builds on ideas from Papanikolas' proof.

Let's recall the necessary definitions. We will use the notation as in Papanikolas' manuscript which differ from the notation in \cite{ga-dt:tpcmzv}.

\begin{defn}\label{def:gamma-et-al}
For $m\geq 0$, let \[ \gamma_m(t)=\prod_{k=1}^m (\theta^{q^m}-t^{q^k})\in \Fq[\theta,t]\] where the empty product (case $m=0$) is defined to be $1$, as usual. Further, we let
\[ D_m = \prod_{k=0}^{m-1} \left(\theta^{q^m}-\theta^{q^{k}}\right)\in \Fq[\theta], \]
and $\Gamma_m\in \Fq[\theta]$ be the Carlitz factorial, defined by 
\[ \Gamma_m = \prod_{j=0}^r D_{j}^{m_j} \]
where $m=m_0+m_1q+\ldots + m_rq^r$ in base-$q$ expansion (i.e. with $m_j\in \{0,\ldots, q-1\}$).

The Anderson-Thakur polynomials $\alpha_n(t)\in K[t]$ are then defined by the generating series
\[ \sum_{n=1}^\infty \frac{\alpha_n(t)}{\Gamma_n}x^{n-1} = \left( 1- \sum_{j=0}^\infty
\frac{\gamma_j(t)}{D_j}x^{q^j}\right)^{-1}\in K[t]\ps{x}. \]
\end{defn}

\begin{rem}
From the definition of the Anderson-Thakur polynomials, one obtains a recursive formula for computing these polynomials: $\alpha_1(t)=1$, and for $n\geq 2$,
\begin{equation}\label{eq:recursion-alpha}
 \frac{\alpha_n(t)}{\Gamma_n}=\sum_{j=0}^{\ell_{n-1}} \frac{\gamma_j(t)}{D_j}\cdot  \frac{\alpha_{n-q^j}(t)}{\Gamma_{n-q^j}}, 
\end{equation}
where $\ell_{n-1}:=\lfloor \log_q(n-1)\rfloor$.\\
Furthermore, one has for $n\geq 1$,
\begin{equation}\label{eq:alpha-q-times} 
\frac{\alpha_{nq}(t)}{\Gamma_{nq}}=\left(\frac{\alpha_n(t)}{\Gamma_n}\right)^q 
\end{equation}
(see \cite[Cor.~7.1.10]{mp:latpcmsvgl}).
\end{rem}

\begin{thm}\label{thm:properties-of-eta}
Let $\eta = \prod_{m=1}^\infty \left( 1+\frac{(t-\theta)^{q^m}}{\theta^{q^m}-\theta}\right)\in K\cs{t-\theta}$ as in Definition \ref{def:eta}.
We have the following identities in $K\cs{t-\theta}$:
\begin{enumerate}
\item \label{item:identity-for-eta}
\begin{equation*} 
 \sum_{j=0}^\infty \frac{\gamma_j(t)}{D_j}\cdot \eta^{q^j} = 1\in K\cs{t-\theta}.
\end{equation*}
\item For all $n\geq 1$, 
\[\eta^{-n} \equiv \frac{\alpha_n(t)}{\Gamma_n} \mod{(t-\theta)^{n+1}}. \]
\item For all $n\geq 1$, and $l\in \NN$ such that $q^l> n$,
\[\eta_l^{-n} \equiv \frac{\alpha_n(t)}{\Gamma_n} \mod{(t-\theta)^{n+1}}. \]
\end{enumerate}
 
\end{thm}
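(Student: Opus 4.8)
The three parts are logically linked, so the plan is to establish part~\eqref{item:identity-for-eta} first, then derive part~(2) from it, and finally reduce part~(3) to part~(2).

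For part~\eqref{item:identity-for-eta}: the key is to recognize the functional equation satisfied by $\eta$. From the product definition one checks directly that $\eta(t) = \left(1 + \frac{(t-\theta)^q}{\theta^q-\theta}\right)\cdot \eta(t)|_{\theta \mapsto \theta^q}$ — more precisely, the substitution $\theta \mapsto \theta^q$ shifts the index $m$ by one in the product. This should translate, after multiplying through by suitable factors, into the Frobenius-twist recursion that the coefficients $\gamma_j/D_j$ encode. Concretely, I would set $S(t) = \sum_{j\ge 0} \frac{\gamma_j(t)}{D_j}\eta^{q^j}$ and show $S = 1$ by exhibiting that $S$ satisfies a recursion forcing it to be the constant $1$: split off the $j=0$ term ($\gamma_0 = 1$, $D_0 = 1$, contributing $\eta$) and use the functional equation for $\eta$ together with the twist $\theta \mapsto \theta^q$ on the tail $\sum_{j\ge 1}$ to rewrite $S$ in terms of a Frobenius twist of $S$ itself; convergence in $K\cs{t-\theta}$ (all terms with $q^j$ large contribute only in high $(t-\theta)$-degree) makes this rigorous. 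The main obstacle is getting the bookkeeping of the twist exactly right so that the defining identity $\left(1 - \sum_j \frac{\gamma_j(t)}{D_j}x^{q^j}\right)\cdot\left(\sum_n \frac{\alpha_n(t)}{\Gamma_n}x^{n-1}\right)=1$ lines up with the $\eta$-product — this is where Papanikolas' argument (Cor.~7.1.10 and the surrounding computations in \cite{mp:latpcmsvgl}) is the model to follow.

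For part~(2): consider the generating series $F(x) := \sum_{n\ge 1} \eta^{-n} x^{n-1} = \eta^{-1}(1 - \eta^{-1}x)^{-1}$. By part~\eqref{item:identity-for-eta}, $\eta^{-1}$ is a root of $\sum_j \frac{\gamma_j(t)}{D_j} y^{q^j} = y$ in an appropriate sense; I would instead argue coefficientwise modulo powers of $(t-\theta)$. Since $\eta \equiv 1 \mod (t-\theta)^q$ (the first nontrivial term in the product is $(t-\theta)^q/(\theta^q-\theta)$), each $\eta^{q^j}$ with $q^j > n$ is $\equiv 1 \mod (t-\theta)^{n+1}$, so modulo $(t-\theta)^{n+1}$ the identity in part~\eqref{item:identity-for-eta} truncates to a finite relation; feeding this into the generating-series identity defining $\alpha_n/\Gamma_n$ and comparing coefficients of $x^{n-1}$ gives $\eta^{-n} \equiv \alpha_n(t)/\Gamma_n \mod (t-\theta)^{n+1}$. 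The cleanest route is probably induction on $n$ using the recursion \eqref{eq:recursion-alpha}: assume the congruence for all smaller indices, expand $\eta^{-n}$ via $\eta^{-n} = \sum_j \frac{\gamma_j(t)}{D_j}\eta^{q^j}\cdot \eta^{-n}$ (valid by part~\eqref{item:identity-for-eta}), observe $\eta^{q^j}\eta^{-n} = \eta^{-(n-q^j)}\cdot \eta^{q^j - (n - q^j) + \ldots}$ — more carefully, $\eta^{q^j - n}$, and for $q^j \le n-1$ this is $\eta^{-(n - q^j)}$ times a high power when... Actually the correct manipulation is $\eta^{-n} = \eta^{-n}\cdot 1 = \sum_{j} \frac{\gamma_j(t)}{D_j}\eta^{q^j - n}$; the terms with $q^j \ge n$ are $\equiv$ their value in $K$ (using $\eta \equiv 1$) up to order $(t-\theta)^{n+1}$, contributing exactly the "correction" making the sum finite, while terms with $q^j \le n-1$ give $\eta^{-(n-q^j)} \equiv \alpha_{n-q^j}/\Gamma_{n-q^j}$ by induction, matching \eqref{eq:recursion-alpha} with $\ell_{n-1} = \lfloor\log_q(n-1)\rfloor$. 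The obstacle is handling the boundary terms $q^j \ge n$ precisely; here one uses that $\gamma_j(t)/D_j \equiv \gamma_j(\theta)/D_j = $ (by a short computation, $D_j/D_j = 1$ when $t = \theta$... one checks $\gamma_j(\theta) = D_j$ for the relevant $j$) so those terms collapse correctly modulo $(t-\theta)^{n+1}$.

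For part~(3): this is immediate from part~(2) once we know $\eta_l \equiv \eta \mod (t-\theta)^{n+1}$ when $q^l > n$. Indeed $\eta/\eta_l = \prod_{m > l}\left(1 + \frac{(t-\theta)^{q^m}}{\theta^{q^m}-\theta}\right)$, and every factor here is $\equiv 1 \mod (t-\theta)^{q^{l+1}}$ with $q^{l+1} > q^l > n$, so $\eta \equiv \eta_l \mod (t-\theta)^{n+1}$, hence $\eta^{-n} \equiv \eta_l^{-n} \mod (t-\theta)^{n+1}$, and combining with part~(2) finishes it. This last part carries no real difficulty; the whole weight of the theorem rests on getting part~\eqref{item:identity-for-eta} and the inductive step of part~(2) right.
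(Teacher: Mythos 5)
There are genuine gaps in parts (1) and (2). For part (1) you only sketch a functional-equation argument and explicitly leave the ``bookkeeping of the twist'' unresolved; moreover the equation you write down, $\eta(t)=\bigl(1+\frac{(t-\theta)^q}{\theta^q-\theta}\bigr)\cdot\eta(t)|_{\theta\mapsto\theta^q}$, is not correct as stated: substituting $\theta\mapsto\theta^q$ into the factor $\frac{t^{q^m}-\theta}{\theta^{q^m}-\theta}$ yields $\frac{t^{q^m}-\theta^q}{\theta^{q^{m+1}}-\theta^q}$, which is not the $(m+1)$-st factor of $\eta$, and the result lives in $K\cs{t-\theta^q}$ rather than $K\cs{t-\theta}$, so the proposed recursion for $S$ does not even take place in the right ring. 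The paper instead proves (1) by a direct computation: it rewrites $\gamma_m(t)/D_m$ as $-\frac{(t-\theta)^{q^m}}{\theta^{q^m}-\theta}\prod_{k=1}^{m-1}\bigl(1+\frac{(t-\theta)^{q^k}}{\theta^{q^k}-\theta^{q^m}}\bigr)$, observes that only exponents $n=q^{m_1}+\cdots+q^{m_s}$ occur in the resulting series, and kills each such coefficient by the Lagrange-interpolation identity of Lemma \ref{lem:lagrange-interpolation}. Some concrete computation of this kind is needed; your plan does not supply it.

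For part (2) your inductive step coincides with the paper's for $q\nmid n$, but two of your auxiliary claims are wrong. First, $\gamma_j(\theta)\neq D_j$ for $j\geq 1$: the factor $k=j$ of $\gamma_j(\theta)$ is $\theta^{q^j}-\theta^{q^j}=0$, so $\gamma_j(\theta)=0$; the correct statement is that $\gamma_j(t)/D_j$ is divisible by $(t-\theta)^{q^j}$, so the tail terms with $j>\ell_{n-1}$ simply vanish modulo $(t-\theta)^{q^{\ell_{n-1}+1}}$ rather than contributing a ``correction''. Second, and more seriously, when $n$ is a power of $q$ one has $q^{\ell_{n-1}+1}=n<n+1$, so this tail estimate is too weak and your induction only yields the congruence modulo $(t-\theta)^{n}$; this is exactly why the paper splits off the case $q\mid n$ and handles it with the identity $\alpha_{nq}/\Gamma_{nq}=(\alpha_n/\Gamma_n)^q$ from Equation \eqref{eq:alpha-q-times}, which your proof never invokes. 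Part (3) is fine and agrees with the paper.
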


\begin{proof}
The series in \eqref{item:identity-for-eta} is well defined in $K\cs{t-\theta}$, since $\gamma_m(t)$ is divisible by $\theta^{q^m}-t^{q^m}=(\theta-t)^{q^m}$. First, we observe that for $m\geq 1$,
\begin{eqnarray*}
 \frac{\gamma_m(t)}{D_m} &=& \frac{\prod_{k=1}^m (\theta^{q^m}-t^{q^k})}{\prod_{k=0}^{m-1} \left(\theta^{q^m}-\theta^{q^{k}}\right)}
 \quad =\quad \frac{(\theta-t)^{q^m}}{\theta^{q^m}-\theta} \cdot \prod_{k=1}^{m-1} \frac{\theta^{q^m}-t^{q^k}}{\theta^{q^m}-\theta^{q^{k}}}\\
 &=& \frac{(\theta-t)^{q^m}}{\theta^{q^m}-\theta} \cdot \prod_{k=1}^{m-1} \left( 1 + \frac{\theta^{q^k}-t^{q^k}}{\theta^{q^m}-\theta^{q^{k}}} \right) \\
& =& - \frac{(t-\theta)^{q^m}}{\theta^{q^m}-\theta} \cdot \prod_{k=1}^{m-1} \left( 1 + \frac{(t-\theta)^{q^k}}{\theta^{q^k}-\theta^{q^{m}}} \right).
\end{eqnarray*}
So the left hand side of the equation in \eqref{item:identity-for-eta} is
\begin{eqnarray*}
&&\eta \quad
- \quad\sum_{j=1}^\infty \frac{(t-\theta)^{q^j}}{\theta^{q^j}-\theta} \cdot \prod_{k=1}^{j-1} \left( 1 + \frac{(t-\theta)^{q^k}}{\theta^{q^k}-\theta^{q^{j}}} \right) \cdot \prod_{m=1}^\infty \left( 1+\frac{(t-\theta)^{q^{m+j}}}{\theta^{q^{m+j}}-\theta^{q^j}}\right) \\
&=&\eta  \quad
-\quad \sum_{j=1}^\infty \frac{(t-\theta)^{q^j}}{\theta^{q^j}-\theta} \cdot \prod_{k=1}^{j-1} \left( 1 + \frac{(t-\theta)^{q^k}}{\theta^{q^k}-\theta^{q^{j}}} \right) \cdot \prod_{k=j+1}^\infty \left( 1+\frac{(t-\theta)^{q^{k}}}{\theta^{q^{k}}-\theta^{q^j}}\right) \\
&=& \prod_{m=1}^\infty \left( 1+\frac{(t-\theta)^{q^m}}{\theta^{q^m}-\theta}\right) 
- \sum_{j=1}^\infty \frac{(t-\theta)^{q^j}}{\theta^{q^j}-\theta} \cdot \prod_{\substack{k=1\\k\ne j}}^{\infty} \left( 1 + \frac{(t-\theta)^{q^k}}{\theta^{q^k}-\theta^{q^{j}}} \right).
\end{eqnarray*} 
Hence, the only powers of $(t-\theta)$ that occur in this expression are $(t-\theta)^0$ (with coefficient $1$) and
 $(t-\theta)^n$ where $n$ is of the form  $n=q^{m_1}+q^{m_2}+\ldots + q^{m_s}$ with $s\geq 1$ and $0< m_1< m_2<\ldots <m_s$.
The coefficient of such an $n$ is:
\[ \prod_{i=1}^s \frac{1}{(\theta^{q_{m_i}}-\theta)} - \sum_{i=1}^s\frac{1}{(\theta^{q^{m_i}}-\theta)} \prod_{\substack{k=1\\k\ne i}}^s \frac{1}{ (\theta^{q^{m_k}}-\theta^{q^{m_i}})}
\]
By Lemma \ref{lem:lagrange-interpolation} below, this coefficient is zero.

\medskip

The second part is shown by induction on $n$.\\
If $n=1$, we have $\frac{\alpha_1(t)}{\Gamma_1}=1$ and \[\eta^{-1}\equiv 1 \mod{(t-\theta)^q}.\] Since $q\geq 2$, this shows the base case.

If $n>1$, and $n$ is divisible by $q$, i.e. $n=q\cdot m$ with $m\geq 1$, then using the identity \eqref{eq:alpha-q-times} and the induction hypothesis for $m$, 
\[ \eta^{-n}-\frac{\alpha_n(t)}{\Gamma_n} =(\eta^{-m})^q-\left(\frac{\alpha_m(t)}{\Gamma_m}\right)^q 
= \left(\eta^{-m}-\frac{\alpha_m(t)}{\Gamma_m}\right)^q  \equiv 0 \mod {(t-\theta)^{(m+1)\cdot q}} \]
As $(m+1)\cdot q=qm+q>n+1$, we are done in this case.

If $n>1$ is not divisible by $q$, we note that $\ell_{n-1}=\lfloor \log_q(n-1)\rfloor=\lfloor \log_q(n)\rfloor=\ell_n\geq 0$.
By using the induction hypothesis for all $m<n$, as well as the identities \eqref{item:identity-for-eta} and \eqref{eq:recursion-alpha}, we obtain:
\begin{eqnarray*}
\eta^{-n} &=& \sum_{j=0}^\infty \frac{\gamma_j(t)}{D_j} \eta^{-(n-q^j)} 
=\sum_{j=0}^{\ell_{n-1}} \frac{\gamma_j(t)}{D_j} \eta^{-(n-q^j)}+ \sum_{j=\ell_{n-1}+1}^{\infty} \frac{\gamma_j(t)}{D_j} \eta^{-(n-q^j)}\\
&\equiv& \sum_{j=0}^{\ell_{n-1}} \frac{\gamma_j(t)}{D_j} \frac{\alpha_{n-q^j}(t)}{\Gamma_{n-q^j}} = \frac{\alpha_{n}(t)}{\Gamma_{n}}
\mod{(t-\theta)^m}
\end{eqnarray*}
where $m=\min\{ q^{\ell_{n-1}+1}, q^j+(n-q^j)+1 \mid j=0,\ldots, \ell_n \}=\min\{  q^{\ell_n+1}, n+1\} =n+1$.

\medskip

The third part is an immediate consequence of the second part, as $\eta\equiv \eta_l$ modulo $(t-\theta)^{q^l}$, and $q^l\geq n+1$.
\end{proof}

\begin{lem}\label{lem:lagrange-interpolation}
Let $F$ be a field, and let $a_1,\ldots, a_s,b\in F$ be pairwise distinct elements. Then
\[  \prod_{i=1}^s \frac{1}{(a_i-b)} - \sum_{i=1}^s\frac{1}{(a_i-b)} \prod_{\substack{k=1\\k\ne i}}^s \frac{1}{(a_k-a_i)}=0.
\]
\end{lem}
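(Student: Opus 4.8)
The plan is to clear denominators and reduce the assertion to the classical Lagrange interpolation formula, keeping every factor in sight. First I would set $P=\prod_{j=1}^s(a_j-b)$, which is nonzero since $b$ is distinct from each $a_j$. Multiplying the left-hand side by $P$ turns $\prod_{i=1}^s\frac{1}{a_i-b}$ into $1$, while in the $i$-th summand of the other term the factor $\frac{1}{a_i-b}$ cancels against $P$, leaving $\prod_{j\ne i}(a_j-b)$. This gives
\[ P\cdot\left(\prod_{i=1}^s\frac{1}{a_i-b}-\sum_{i=1}^s\frac{1}{a_i-b}\prod_{\substack{k=1\\k\ne i}}^s\frac{1}{a_k-a_i}\right)=1-\sum_{i=1}^s\prod_{\substack{k=1\\k\ne i}}^s\frac{a_k-b}{a_k-a_i}, \]
so everything comes down to evaluating the sum $S:=\sum_{i=1}^s\prod_{k\ne i}\frac{a_k-b}{a_k-a_i}$.

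The key step is to read $S$ as Lagrange interpolation at the nodes $a_1,\dots,a_s$. The basis polynomials $\ell_i(x)=\prod_{k\ne i}\frac{x-a_k}{a_i-a_k}$ reproduce every polynomial of degree less than $s$, so applied to the constant polynomial $1$ they satisfy $\sum_{i=1}^s\ell_i(x)=1$ identically in $x$. Evaluating at $x=b$ and rewriting each factor as $\frac{b-a_k}{a_i-a_k}=\frac{a_k-b}{a_k-a_i}$ yields $S=\sum_{i=1}^s\ell_i(b)=1$.

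Combining the two steps, $P\cdot(\text{LHS})=1-S=0$, and dividing by $P\neq0$ shows that the left-hand side equals $0$. This is exactly the vanishing used in the proof of Theorem~\ref{thm:properties-of-eta}, with $a_i=\theta^{q^{m_i}}$, $b=\theta$, and $s\geq1$. The delicate point, which must not be mishandled, is to keep the factor $P=\prod_j(a_j-b)$ throughout: clearing denominators in the claimed identity $\text{LHS}=1$ turns its right-hand side into $P$, so that the claim is equivalent to $1-S=\prod_j(a_j-b)$; since $S=1$ this reads $0=\prod_j(a_j-b)$, which holds only in the degenerate case $s=0$ (where the left-hand side is the empty expression $1-0=1$). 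Hence for every $s\geq1$ — the case needed in the application — the correct value is $0$, and once $P$ is retained the whole argument is a one-line consequence of Lagrange interpolation over $F$.
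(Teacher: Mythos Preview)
Your argument is correct and follows essentially the same route as the paper: both invoke the Lagrange identity $\sum_{i=1}^s\prod_{k\ne i}\frac{a_k-x}{a_k-a_i}=1$ for the constant polynomial~$1$, then divide by $\prod_j(a_j-x)$ and specialize at $x=b$. You are also right that the ``$=1$'' in the displayed statement is a misprint for ``$=0$'': carrying out the paper's own step yields $\prod_i\frac{1}{a_i-b}=\sum_i\frac{1}{a_i-b}\prod_{k\ne i}\frac{1}{a_k-a_i}$, and the application in Theorem~\ref{thm:properties-of-eta} explicitly uses the lemma to conclude that ``this coefficient is zero.''
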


\begin{proof}
By Lagrange interpolation, we have the identity
\[ \sum_{i=1}^s  \prod_{\substack{k=1\\k\ne i}}^s \frac{(a_k-x)}{(a_k-a_i)} =1 \]
in the polynomial ring $F[x]$, since $a_1,\ldots, a_s\in F$ are pairwise distinct.\\
Dividing by $ \prod_{i=1}^s \frac{1}{(a_i-x)}$ and evaluating at $b\not\in \{a_1,\ldots, a_s\}$ leads to the desired result.
\end{proof}

From the previous theorem and Corollary \ref{cor:nice-expression}, we immediately get the following corollary

\begin{cor}\label{cor:even-nicer-expression}
For all $n\geq 1$ and $0\leq j\leq n$, we have
\[ \hdth{j}{\eta^{-n}}|_{t=\theta} = \hdth{j}{\frac{\alpha_{n}(t)}{\Gamma_{n}}}\Big|_{t=\theta}, \]
as well as
\[ z_{n-j}= \hdth{j}{\frac{\alpha_{n}(t)}{\Gamma_{n}}\pitilde^n}\Big|_{t=\theta}, \]
where $z_{n-j}$ is the coordinate of the period of Carlitz tensor power as given in Equation \eqref{eq:omega-tensor-power}.
\end{cor}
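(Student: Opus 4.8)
The plan is to deduce Corollary~\ref{cor:even-nicer-expression} directly from the two ingredients already in hand, namely Theorem~\ref{thm:properties-of-eta} and Corollary~\ref{cor:nice-expression}, so that essentially no new computation is needed. First I would establish the first displayed identity. Part~(2) of Theorem~\ref{thm:properties-of-eta} tells us that $\eta^{-n}$ and $\frac{\alpha_n(t)}{\Gamma_n}$ agree modulo $(t-\theta)^{n+1}$ in $K\cs{t-\theta}$; that is, their difference lies in the ideal $(t-\theta)^{n+1}$. The key observation is that the hyperdifferential operator $\hdthe{j}$ with respect to $\theta$, when composed with evaluation at $t=\theta$, kills any element of $(t-\theta)^{n+1}$ as long as $j\leq n$. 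Concretely, if $f=(t-\theta)^{n+1}g$ with $g\in K\cs{t-\theta}$, then $\Dth(f)=(t-(\theta+X))^{n+1}\Dth(g)$, and upon setting $t=\theta$ this becomes $(-X)^{n+1}\Dth(g)|_{t=\theta}$, whose coefficient of $X^j$ vanishes for every $j\leq n$. Hence $\hdth{j}{f}|_{t=\theta}=0$, and applying this to $f=\eta^{-n}-\frac{\alpha_n(t)}{\Gamma_n}$ yields $\hdth{j}{\eta^{-n}}|_{t=\theta}=\hdth{j}{\frac{\alpha_n(t)}{\Gamma_n}}|_{t=\theta}$ for all $0\leq j\leq n$.

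Next I would pass to the statement about the period coordinates $z_{n-j}$. By Corollary~\ref{cor:nice-expression}, choosing $l\in\NN$ with $q^l\geq n$, one has $z_{n-j}=\hdth{j}{\eta_{l-1}^{-n}\pitilde^n}|_{t=\theta}$ for $0\leq j\leq n-1$. Using the generalized Leibniz rule, $\hdth{j}{\eta_{l-1}^{-n}\pitilde^n}=\sum_{i+k=j}\hdth{i}{\eta_{l-1}^{-n}}\cdot\hdth{k}{\pitilde^n}$, so to replace $\eta_{l-1}^{-n}$ by $\frac{\alpha_n(t)}{\Gamma_n}$ it suffices to know that $\hdth{i}{\eta_{l-1}^{-n}}|_{t=\theta}=\hdth{i}{\frac{\alpha_n(t)}{\Gamma_n}}|_{t=\theta}$ for all $0\leq i\leq j\leq n-1$. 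This is exactly what part~(3) of Theorem~\ref{thm:properties-of-eta} gives (together with the same vanishing argument as above, now applied to $\eta_l^{-n}-\frac{\alpha_n(t)}{\Gamma_n}\in(t-\theta)^{n+1}$ for $q^l>n$): choosing $l$ so that $q^l>n$ we get $\hdth{i}{\eta_l^{-n}}|_{t=\theta}=\hdth{i}{\frac{\alpha_n(t)}{\Gamma_n}}|_{t=\theta}$ for all $i\leq n$, and a fortiori for $i\leq n-1$. Substituting term by term into the Leibniz expansion then gives $z_{n-j}=\sum_{i+k=j}\hdth{i}{\frac{\alpha_n(t)}{\Gamma_n}}|_{t=\theta}\cdot\hdth{k}{\pitilde^n}|_{t=\theta}=\hdth{j}{\frac{\alpha_n(t)}{\Gamma_n}\pitilde^n}|_{t=\theta}$, as claimed. (Note $\pitilde^n$ does not depend on $t$, so $\hdth{k}{\pitilde^n}|_{t=\theta}=\hdth{k}{\pitilde^n}$.)

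There is essentially no main obstacle here: the whole content has been front-loaded into Theorem~\ref{thm:properties-of-eta} and Corollary~\ref{cor:nice-expression}, and the only genuinely new point is the elementary remark that $\hdthe{j}$ followed by specialization $t=\theta$ annihilates $(t-\theta)^{n+1}$ for $j\leq n$. The one thing to be careful about is bookkeeping on the index ranges: Corollary~\ref{cor:nice-expression} is stated for $0\leq j\leq n-1$ whereas the first identity of the corollary is asserted for $0\leq j\leq n$, and part~(2) of Theorem~\ref{thm:properties-of-eta} requires working modulo $(t-\theta)^{n+1}$ (so $j$ up to $n$ is fine), while part~(3) needs $q^l>n$; I would make sure the chosen $l$ satisfies whichever inequality is needed and note that enlarging $l$ does not change the specialized values by part~(3). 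With these ranges tracked, the corollary follows in a couple of lines.
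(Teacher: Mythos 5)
Your proposal is correct and follows essentially the same route as the paper: the one new ingredient in both is the elementary observation that $\hdthe{j}$ followed by evaluation at $t=\theta$ annihilates multiples of $(t-\theta)^{m}$ for $m>j$, which is then applied to the congruences of Theorem~\ref{thm:properties-of-eta} and combined with Corollary~\ref{cor:nice-expression}. The only (immaterial) difference is that you pass the product $\eta_{l-1}^{-n}\pitilde^n$ through an explicit Leibniz expansion, whereas the paper applies the congruence principle directly to the products.
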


\begin{proof}
For all $m>j\geq 0$, and $f\in K\cs{t-\theta}$, the hyperderivative $\hdth{j}{(t-\theta)^m\cdot f}$ is divisible by $(t-\theta)$, and so $\hdth{j}{(t-\theta)^m\cdot f}|_{t=\theta}=0$.
Therefore, if two elements $h,g\in K\cs{t-\theta}$ are congruent modulo $(t-\theta)^{n+1}$, then for all $0\leq j\leq n$:
\[  \hdth{j}{h}|_{t=\theta} =  \hdth{j}{g}|_{t=\theta}. \]
Therefore, by Theorem \ref{thm:properties-of-eta}, we have for all $0\leq j\leq n$:
\[  \hdth{j}{\eta_{l-1}^{-n}}|_{t=\theta} =\hdth{j}{\eta^{-n}}|_{t=\theta} = \hdth{j}{\frac{\alpha_{n}(t)}{\Gamma_{n}}}\Big|_{t=\theta},\]
as well as  (using also Corollary \ref{cor:nice-expression})
\[  z_{n-j}=\hdth{j}{\eta_{l-1}^{-n}\pitilde^n}|_{t=\theta} =\hdth{j}{\eta^{-n}\pitilde^n}|_{t=\theta} = \hdth{j}{\frac{\alpha_{n}(t)}{\Gamma_{n}}\pitilde^n}\Big|_{t=\theta}\]
where as before $q^l>n$.
\end{proof}

\bibliographystyle{alpha}

\def\cprime{$'$}

\vspace*{.5cm}

\parindent0cm

\end{document}